\numberwithin{equation}{section}
\theoremstyle{definition}
\newtheorem{theorem}{Theorem}[section]
 \theoremstyle{definition}
    \newaliascnt{lemma}{theorem}
    \newtheorem{lemma}[lemma]{Lemma}
 \theoremstyle{definition}
    \newaliascnt{corollary}{theorem}
\theoremstyle{definition}
    \newaliascnt{definition}{theorem}
 \theoremstyle{definition}
    \newaliascnt{remark}{theorem}
    \newtheorem{remark}[remark]{Remark}
\theoremstyle{definition}
    \newaliascnt{proposition}{theorem}
\newcommand{\cc}{{\mathfrak{c}}}
\newcommand{\ca}{{\mathfrak{a}}}
\newcommand{\cs}{{\mathfrak{s}}}
\date{\today}
\begin{document}

\pagestyle{plain}
\title[Higher-Dimensional NLS Equation with Quasi-Periodic Initial Data]{The Weakly Nonlinear Schr\"odinger Equation in Higher Dimensions with Quasi-periodic Initial Data
}

\author{Fei Xu}
\address{\scriptsize (F. Xu)~Institute of Mathematics, Jilin University, Changchun 130012, P.R. China.}
\email{\color{magenta}stuxuf@outlook.com}
\thanks{F. X. is supported by the China post-doctoral grant (BX20240138). He would like to thank David Damanik (Rice University) and Yong Li (Jilin University) for their continuous guidance, Robert Schippa (University of California, Berkeley) for his comment on \cite{S2024}, Thomas Spencer (Institute for Advanced Study, Princeton) for sharing \cite{DSS20},  Jason Zhao (University of California, Berkeley) for the discussion on \cite{Z2024},
and the editor and anonymous referee for their constructive suggestions.
}

\begin{abstract}
In this paper, under the exponential/polynomial decay condition in Fourier space, we prove that the nonlinear solution to the quasi-periodic Cauchy problem for the weakly nonlinear Schr\"odinger equation in higher dimensions will asymptotically approach the associated linear solution within a specific time scale. The proof is based on a combinatorial analysis method present through diagrams. Our results and methods apply to {\em arbitrary} space dimensions and general power-law nonlinearities of the form $\pm|u|^{2p}u$, where $1\leq p\in\mathbb N$.
\end{abstract}

\maketitle

\tableofcontents

\section{Introduction}
Recently, the almost-periodic initial value problem for time-dependent PDEs has attracted significant attention. This interests originates from the so-called {\em Deift conjecture}: For the KdV equation, if the initial data is almost periodic, then the solution evolves almost periodically in time; see \cite{D}. This conjecture has been explored in various contexts: counterexamples \cite{D21,CKV}, quasi-periodic initial data \cite{DG}, almost-periodic initial data \cite{BDGL,LY,EVY}, Toda lattice (discrete version of the KdV equation) \cite{BDLV,ZC} and the generalized KdV equation \cite{DLX24JMPA}.

In the current paper, we study the higher-dimensional nonlinear Schr\"odinger equation (NLS for short)
\begin{align}\label{NLS}
{\rm i}\partial_tu+\Delta_du+\varepsilon|u|^{2p}u=0
\end{align}
with the quasi-periodic initial data
\begin{align}\label{ID}
u(0,\vec{x})=\sum_{\vec{n}\in\mathbb Z^\nu}c(\vec{n})e^{{\rm i}\langle\vec{n}\Omega,\vec{x}\rangle},\quad \vec{x}\in\mathbb R^d,
\end{align}
where $2\leq d\in\mathbb N$ represents the space dimension. Regarding these notations we will introduce them below. Specifically, we focus on the asymptotic behavior of the spatially quasi-periodic solutions with the same frequency matrix $\Omega$ as the initial data for the quasi-periodic Cauchy problem \eqref{NLS}-\eqref{ID}.

Below we introduce the notations used above.
\begin{itemize}
\item Throughout this paper, please keep in mind that all vectors are presented in row form. Furthermore we use $^{\top}$ to denote the transpose of a vector.
\item We use $t\in\mathbb R$ and $\vec{x}=(x_1,\cdots,x_d)\in\mathbb R^d$ to represent the time and spatial coordinates, respectively.
\item The solution $u(\cdot,\cdot):\mathbb R^{1+d}\rightarrow\mathbb C$ of \eqref{NLS} and the initial data $u(0,\cdot):\mathbb R^d\rightarrow\mathbb C$ are complex valued functions.
    \item $\partial_t$ stands for the time derivative, and $\Delta_d:=\sum_{j=1}^d\partial_{j}^2$ ($\partial_j$ denotes the partial derivative with respect to $x_j$-direction) denotes the Laplacian.
\item The strength parameter $p$ of nonlinearity is not less than $1$.
\item The sign of $\varepsilon$ sgn$(\varepsilon)\in\{+1,-1\}$ directly affects how waves interact with the medium: whether they tend to concentrate and form localized structure (focusing case: sgn$(\varepsilon)=+1$) or dispersive and spread out (defocusing case: sgn$(\varepsilon)=-1$); see \cite{KTV09JEMS}. Throughout this paper, we focus on the long-time asymptotic behavior of the quasi-periodic Cauchy problem \eqref{NLS}-\eqref{ID} within the time scale associated with the nonlinear parameter $\varepsilon$. Thus we assume that $0<|\varepsilon|\ll1$.
\item In the quasi-periodic Fourier space, we define the matrix as
\begin{align}
\Omega=\text{diag}\{\omega_1^{\top},\cdots,\omega_d^{\top}\}
=\begin{pmatrix}
\omega_1^{\top}&&\\
&\ddots&\\
&&\omega_d^{\top}
\end{pmatrix},
\end{align}
where $\omega_j=(\omega_j^1,\cdots,\omega_j^{\nu_j})\in\mathbb R^{\nu_j}$ is the frequency vector in the $x_j$-direction, and $2\leq\nu_j\in\mathbb N$ for all $j\in\{1,\cdots,d\}$.
\item Furthermore, we use $\vec{n}=(n_1,\cdots,n_d)\in\mathbb Z^{\nu_1}\times\cdots\times\mathbb Z^{\nu_d}\simeq\mathbb Z^\nu$ to denote the Pontryagin dual of the spatial variable $\vec{x}\in\mathbb R^d$, where $n_j=(n_{j,1},\cdots,n_{j,\nu_j})\in\mathbb Z^{\nu_j}$ for all $j\in\{1,\cdots,d\}$, and $\nu=\nu_1+\cdots+\nu_d$.
\item $\langle\vec{y},\vec{z}\rangle:=\vec{y}\cdot\vec{z}^{\top}=\sum_{j=1}^{N}y_jz_j$ denotes the Euclidean inner product, where $\vec{y}=(y_1,\cdots,y_N)\in\mathbb R^N, \vec{z}=(z_1,\cdots,z_N)\in\mathbb R^N$ and $2\leq N\in\mathbb N$.
\item Throughout this paper, we assume that the frequency matrix $\Omega$ satisfies certain non-resonant condition in the following sense: $\vec{n}\Omega=0\in\mathbb Z^{\nu}$ if and only if $\vec{n}=0\in\mathbb Z^\nu$. This is equivalent to $\omega_j$ being rationally independent for all $j \in\{1, \cdots, d\}$, meaning that $\langle n_j,\omega_j\rangle=0$ if and only if $n_j=0\in\mathbb Z^{\nu_j}$ for all $j\in\{1,\cdots,d\}$. For such a frequency matrix $\Omega$, the initial data $u(0,\cdot)$ is quasi-periodic in the $x_j$-direction with the frequency vector $\omega_j$ for all $j\in\{1,\cdots,d\}$.
\item $\{c(\vec{n})\}_{\vec{n}\in\mathbb Z^\nu}$ is a sequence of initial Fourier data (it satisfies some decay condition in Fourier space which will be introduced below).
\end{itemize}

%

This paper examines quasi-periodic functions in higher dimensions as initial data for the NLS equation \eqref{NLS}. Specifically, we focus on functions of the form \eqref{ID} in the spatial variables. This means that $u(0,\cdot)$ is quasi-periodic in each direction $x_j$ with frequency vector $\omega_j\in\mathbb R^{\nu_j}$ provided that it is rationally independent.

It should be emphasized that these functions differ from periodic functions and those with the vanishing property at infinity, as they exhibit oscillatory behavior and cannot be viewed as functions on the circle; see \cite{Stein}.

For the spatially quasi-periodic function in higher dimensions, we have the following orthogonal property.
\begin{lemma}[Orthogonality of $\{e^{{\rm i}\langle\vec{n}\Omega,\vec{x}\rangle}\}_{\vec{n}\in\mathbb Z^\nu}$]\label{orglemm}
Let $\Omega=\text{diag}\{\omega_1^{\top},\cdots,\omega_d^{\top}\}$ be rationally independent frequency matrix (i.e., $\vec{n}\Omega=0\Rightarrow\vec{n}=0\in\mathbb Z^\nu$, or rather, $\omega_jn_j=0\Rightarrow n_j=0\in\mathbb Z^{\nu_j}$ for all $j\in\{1,\cdots,d\}$). For any $\vec{n}=(n_1,\cdots,n_d)\in\mathbb Z^{\nu_1+\cdots+\nu_d}$ and $\vec{m}=(m_1,\cdots,m_d)\in\mathbb Z^{\nu_1+\cdots+\nu_d}$, we have
\begin{align}\label{org}
\lim_{\substack{L_j\rightarrow+\infty\\ j=1,\cdots,d}}\frac{1}{\prod_{j=1}^{d}2L_j}\int_{-L_1}^{L_1}\cdots\int_{-L_d}^{L_d}e^{{\rm i}\langle(\vec{n}-\vec{m})\Omega,\vec{x}\rangle}{\rm d}\vec{x}=\prod_{j=1}^{d}\delta_{n_jm_j},
\end{align}
where $\delta$ stands for the Kronecker delta defined by letting $\delta_{n_jm_j}=1$ if $n_j=m_j$ and $\delta_{n_jm_j}=0$ if $n_j\neq m_j$.
\end{lemma}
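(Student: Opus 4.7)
\medskip

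The plan is to exploit the block-diagonal structure of $\Omega$ to reduce the $d$-dimensional Bohr-type average to a product of one-dimensional averages, each of which can be handled by a direct sinc computation.

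First I would unpack the phase. Writing $\vec{k} := \vec{n}-\vec{m} = (k_1,\dots,k_d)$ with $k_j \in \mathbb{Z}^{\nu_j}$, the block-diagonal form of $\Omega$ gives $\vec{k}\Omega = (\langle k_1,\omega_1\rangle,\dots,\langle k_d,\omega_d\rangle) \in \mathbb{R}^d$, hence
\[
\langle (\vec{n}-\vec{m})\Omega,\vec{x}\rangle \;=\; \sum_{j=1}^{d}\langle n_j-m_j,\omega_j\rangle\, x_j.
\]
Therefore the integrand splits as a product of exponentials in the individual $x_j$, and Fubini reduces the left-hand side of \eqref{org} to
\[
\prod_{j=1}^{d}\;\lim_{L_j\to\infty}\frac{1}{2L_j}\int_{-L_j}^{L_j} e^{{\rm i}\langle n_j-m_j,\omega_j\rangle x_j}\,{\rm d}x_j.
\]

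Next I would evaluate each one-dimensional factor. Set $\alpha_j := \langle n_j-m_j,\omega_j\rangle$. If $n_j = m_j$, then $\alpha_j = 0$ and the $j$-th factor is identically $1$, matching $\delta_{n_jm_j}=1$. If $n_j \neq m_j$, the rational independence of $\omega_j$ (the hypothesis on $\Omega$, applied to $n_j-m_j$) guarantees $\alpha_j \neq 0$; a direct primitive gives
\[
\frac{1}{2L_j}\int_{-L_j}^{L_j} e^{{\rm i}\alpha_j x_j}\,{\rm d}x_j \;=\; \frac{\sin(\alpha_j L_j)}{\alpha_j L_j},
\]
which tends to $0$ as $L_j\to\infty$, matching $\delta_{n_jm_j}=0$. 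Multiplying the $d$ limits yields the claimed product $\prod_{j=1}^{d}\delta_{n_jm_j}$.

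There is no real obstacle here: the statement is essentially a multidimensional Bohr mean-value formula, and the only structural input beyond Fubini and the sinc estimate is the rational-independence hypothesis, which is invoked precisely to ensure $\alpha_j\neq 0$ whenever $n_j\neq m_j$. The mildly subtle point worth flagging is that the limits in each coordinate direction are taken independently (not jointly along a diagonal); this is automatic once the integral has been factored, since each one-dimensional limit exists separately, so the iterated and simultaneous limits coincide.
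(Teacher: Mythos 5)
Your proof is correct. The paper states \autoref{orglemm} without supplying a proof, so there is nothing to compare against; your argument --- unpacking the phase via the block-diagonal structure of $\Omega$, factoring by Fubini, evaluating each one-dimensional Bohr mean as a sinc, and invoking rational independence of $\omega_j$ to force $\langle n_j-m_j,\omega_j\rangle\neq 0$ when $n_j\neq m_j$ --- is the standard and essentially unique route, and it is complete.
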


The NLS equation \eqref{NLS} is one of the most important mathematical physics equations. It is a mathematical model with applications across various fields of physics. Its relevance to Bose-Einstein condensation highlights how collective behavior in quantum gases can be described through mean-field theories. In nonlinear optics, the Kerr effect, in which the refractive index of a material changes with light intensity, can be modeled using the NLS equation, facilitating the study of pulse propagation in optical fibers. Additionally, the phenomenon of the rogue waves, which are unexpectedly large and powerful ocean waves, can also be analyzed using this framework, as the NLS equation captures the envelope dynamics of these waves; see \cite{I08AM,KTV09JEMS,G2000,dgv,GGKS} and the references therein.

From the perspective of mathematics, the NLS equation \eqref{NLS} enjoys the following scaling symmetry: $u_\lambda(t,x):=\lambda^{\theta}u(\lambda^2t,\lambda \vec{x})$ is a solution to \eqref{NLS} if and only if $\theta=\frac{2}{2p}$, where $\lambda>0$; see \cite{D23IMRN}. This scaling property is fundamental as it reveals the self-similar nature of solutions to the NLS equation, providing insight into how solutions behave under rescaling of time and space.

Furthermore, consider the $L^2$-Sobolev space $\dot{H}^{s}(\mathbb R^d)$, with norm defined as $$\|f\|_{\dot{H}^{s}(\mathbb R^d)}:=\langle(-\Delta_d)^{s/2}f,(-\Delta_d)^{s/2}f\rangle_{\mathcal L^2(\mathbb R^d)}^{1/2}$$ for any  $f\in\dot{H}^{s}(\mathbb R^d)$, we know that $$\|u(t,\cdot)\|_{\dot{H}^{s}(\mathbb R^d)}=\|u_\lambda(t,\cdot)\|_{\dot{H}^{s}(\mathbb R^d)}$$ if the Sobolev-critical index $s_c=\frac{d}{2}-\frac{2}{2p}$; see \cite{D23IMRN}. This relationship illustrates how the norms of solutions are preserved under the scaling transformation, which is critical for understanding the long-time behavior of solutions and their stability.

In particular, there are two remarkable cases regarding the behavior of solutions to the NLS equation. The first one is the so-called {\em mass-critical (pseudoconformal)} case, where the Sobolev-critical index $s_c=0$. In this case, we have the specific condition $2p=\frac{4}{d}$. This is due to the fact that the solution $u$ of \eqref{NLS} preserves the mass ($L^2$-norm of a solution)
\[M[u](t):=\int_{\mathbb R^d}|u(t,\vec{x})|^2{\rm d}\vec{x}=M[u](0).\]
The second one is the so-called {\em energy-critical} case, where the Sobolev-critical index $s_c=1$, i.e., $2p=\frac{4}{d-2}$. This is due to the fact that the solution $u$ of \eqref{NLS} preserves the (Hamiltonian) energy
\[
H[u](t):=\frac{1}{2}\int|\nabla u(t,\vec{x})|^2{\rm d}\vec{x}+\frac{\varepsilon}{2p+1}\int|u(t,\vec{x})|^{2p+1}{\rm d}\vec{x}=H[u](0).
\]
These indicate a balance between the nonlinearity and dispersion effects in this equation, leading to rich dynamical behavior; see \cite{CW89,KTV09JEMS,D23IMRN} and the references therein.
%
%
%
%

The well-posedness analysis of the Cauchy problem for the higher-dimensional NLS equation has been extensively studied; see \cite{CW89,CW90,B99JAMS,G2000,T05,RV07AJM,I08AM,KTV09JEMS,DLM19,D23IMRN} and the references therein.

%
%

Based on these fruitful results, this paper focuses on the asymptotic dynamics of spatially quasi-periodic solutions with the same frequency matrix as the initial data \eqref{ID} for the quasi-periodic Cauchy problem \eqref{NLS}-\eqref{ID}. That is, such a solution is given by the following Fourier expansion:
\begin{align}\label{S}
u(t,\vec{x})=\sum_{\vec{n}\in\mathbb Z^\nu}c(t,\vec{n})e^{{\rm i}\langle\vec{n}\Omega,\vec{x}\rangle},
\end{align}
where $\{c(t,\vec{n})\}_{\vec{n}\in\mathbb Z^\nu}$ is a sequence of unknown time-dependent Fourier coefficients. We are interested in the following question:
\begin{center}
{\bf Q}:~\em What is the behavior of the spatially quasi-periodic solution \eqref{S} \\to the quasi-periodic Cauchy problem \eqref{NLS}-\eqref{ID} \\as time tends to infinity?
\end{center}

To the best of our knowledge, the asymptotic behavior of the quasi-periodic Cauchy problem for the weakly nonlinear Schr\"odinger equation in higher dimensions has not been studied in previous works.

To answer this question, we first determine the time scale for existence and uniqueness, and then study the asymptotic behavior within this specific scale. Regarding the study of the existence and uniqueness problem (without smallness assumption on the nonlinearity), in the one-dimensional case, Damanik-Li-Xu \cite{F2024CULA,DLX1} and Papenburg \cite{P2024} obtained the local existence and uniqueness results at the same time.  The result of \cite{DLX1} is based on the assumption of polynomially decaying Fourier coefficients. Later, Schippa \cite{S2024} studied the cubic nonlinear Sch\"odinger equation in two dimensions with polynomially decaying Fourier data and proved that the regularity is sharp. In higher dimensions, Zhao \cite{Z2024} and Xu \cite{Xu2024} obtained the local existence and uniqueness result almost simultaneously\footnote{As this paper was being completed, Zhao submitted an interesting paper to arXiv in which he studied the dispersive equations with bounded  data in higher dimensions; see \cite{Z2024}.}. The former also studied the continuous dependence problem, and the latter obtained the asymptotic behaviour within the time scale.

Additionally, Oh investigated the well-posedness for the NLS equation in one dimension with almost periodic initial data and limit-periodic initial data; see \cite{O15l,O15} respectively. We also refer the reader to \cite{DSS20, K2023} on this topic.


Our results and methods apply to  $(d,p,\varepsilon)\in[1,\infty)\times[1,\infty)\times\{\pm1\}$. For the sake of convenience, we consider only the simplest but nontrivial case:  $(d,p,\varepsilon)=(2,1,+1)$.
That is, consider the following cubic nonlinear Schr\"odinger equation
\begin{align}\label{NLS2}
{\rm i}\partial_tu+(\partial_x^2+\partial_y^2)u+|u|^2u=0
\end{align}
in  two dimensions with the quasi-periodic initial data
\begin{align}\label{ID2}
u(0,x,y)=
\sum_{(m,n)\in\mathbb Z^{\nu_1}\times\mathbb Z^{\nu_2}}c(m,n)
e^{{\rm i}\big(\langle m,\omega\rangle x+\langle n,\omega^\prime\rangle y\big)}, \quad (x,y)\in\mathbb R^2.
\end{align}

The main results of this paper are stated as follows.

\begin{theorem}\label{thm}
Consider the quasi-periodic Cauchy problem \eqref{NLS2}-\eqref{ID2}, and make the following assumptions:
\begin{itemize}
  \item The frequency vectors $\omega$ and $\omega^\prime$ are rationally independent, that is,
$\langle m,\omega\rangle =0, m\in\mathbb Z^{\nu_1}\Leftrightarrow m=0\in\mathbb Z^{\nu_1}$; $\langle n,\omega^\prime\rangle =0, n\in\mathbb Z^{\nu_2}\Leftrightarrow n=0\in\mathbb Z^{\nu_2}$.
  \item The initial Fourier data satisfies the $(\kappa_1,\kappa_2)$-exponential decaying condition, i.e.,
\begin{align}\label{ic}
|c(m,n)|\leq e^{-(\kappa_1|m|+\kappa_2|n|)}, \quad (m,n)\in\mathbb Z^{\nu_1}\times\mathbb Z^{\nu_2},
\end{align}
where $0<\kappa_j\leq1$ for $j\in\{1,2\}$, and $|\cdot|$ denotes the $\ell^1$-norm of a vector, that is, $|m|=\|m\|_{\ell^1}=\sum_{j=1}^{\nu_1}|m_j|$ for $m=(m_j)_{1\leq j\leq\nu_1}\in\mathbb Z^{\nu_1}$;  $|n|=\|n\|_{\ell^1}=\sum_{j=1}^{\nu_1}|n_j|$ for $n=(n_j)_{1\leq j\leq\nu_1}\in\mathbb Z^{\nu_2}$.
\end{itemize}

There exists a positive constant (see \eqref{te}) $$T_\varepsilon=\frac{4}{27}\left(\frac{\kappa_1}{6}\right)^{\nu_1}\left(\frac{\kappa_2}{6}\right)^{\nu_2}
\varepsilon^{-1}$$ and a sequence of uniform-in-time $(\kappa_1/2,\kappa_2/2)$-exponential decaying Fourier coefficients, that is,  $$|\cc(t,m,n)|\lesssim e^{-\left(\frac{\kappa_1}{2}|m|+\frac{\kappa_2}{2}|n|\right)},\quad(t,m,n)\in[0,T_\varepsilon]\times\mathbb Z^{\nu_1}\times\mathbb Z^{\nu_2},$$
such that 
$$\mathfrak u(t,x,y)=\sum_{(m,n)\in\mathbb Z^{\nu_1}\times\mathbb Z^{\nu_2}}\mathfrak c(t,m,n)e^{{\rm i}(\langle m,\omega\rangle x+\langle n,\omega^\prime\rangle y)},\quad (t,x,y)\in[0,T_\varepsilon]\times\mathbb R\times\mathbb R,
$$
is the unique spatially quasi-periodic solution to  
\eqref{NLS2}-\eqref{ID2}.

Set
$$\mathfrak u_0(t,x,y)=\sum_{(m,n)\in\mathbb Z^{\nu_1}\times\mathbb Z^{\nu_2}}e^{-{\rm i}\big(\langle m,\omega\rangle^2+\langle n,\omega^\prime\rangle^2\big)t}c(m,n)e^{{\rm i}(\langle m,\omega\rangle x+\langle n,\omega^\prime\rangle y)}.$$
Let $t\sim|\varepsilon|^{-1+\eta}$ with $0<\eta\ll1$. Then the nonlinear solution $\mathfrak u$ will asymptotically approach the linear solution $\mathfrak u_0$ in the sense of
\begin{align}
\|(\mathfrak u-\mathfrak u_0)(t,x,y)\|_{\mathcal H^{(\rho_1,\rho_2)}(\mathbb R^2)}\rightarrow 0, \quad\text{as}~\varepsilon\rightarrow0,
\end{align}
where $\|f\|_{\mathcal H^{(\rho_1,\rho_2)}(\mathbb R^2)}=\|e^{\rho_1|m|+\rho_2|n|}\hat f(m,n)\|_{\ell^2_{(m,n)}(\mathbb Z^{\nu_1}\times\mathbb Z^{\nu_2})}$ with $0<\frac{\kappa_j}{2}-2\rho_j\leq1$ for $j\in\{1,2\}$.
\end{theorem}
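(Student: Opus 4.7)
I would project \eqref{NLS2} onto its quasi-periodic Fourier modes. Substituting $u(t,x,y)=\sum_{(m,n)}\cc(t,m,n)\,e^{\mathrm i(\langle m,\omega\rangle x+\langle n,\omega'\rangle y)}$ and invoking the orthogonality from \autoref{orglemm} reduces \eqref{NLS2} to the infinite ODE hierarchy
\begin{equation*}
\mathrm i\dot{\cc}(t,m,n)=\bigl(\langle m,\omega\rangle^{2}+\langle n,\omega'\rangle^{2}\bigr)\cc(t,m,n)-\!\!\!\sum_{\substack{m_{1}-m_{2}+m_{3}=m\\ n_{1}-n_{2}+n_{3}=n}}\!\!\!\cc(t,m_{1},n_{1})\,\overline{\cc(t,m_{2},n_{2})}\,\cc(t,m_{3},n_{3}).
\end{equation*}
Passing to the interaction picture $\ca(t,m,n):=e^{\mathrm i\lambda(m,n)t}\cc(t,m,n)$ with $\lambda(m,n):=\langle m,\omega\rangle^{2}+\langle n,\omega'\rangle^{2}$ kills the linear part and produces a Duhamel integral equation whose right-hand side is the cubic convolution multiplied by an oscillatory phase $e^{\mathrm i\Phi s}$; I discard this phase by taking absolute values, so no number-theoretic input on $(\omega,\omega')$ beyond rational independence is needed.

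\textbf{Tree expansion and the core estimate.} Iterating the Duhamel equation produces the formal series $\ca=\sum_{k\ge 0}\ca_{k}$ with $\ca_{0}(m,n)=c(m,n)$, in which each $\ca_{k}$ is indexed by rooted ternary trees $T$ with $k$ internal cubic nodes and $2k+1$ leaves: each leaf carries an initial-data factor $c(m_{i},n_{i})$ or $\overline{c(m_{i},n_{i})}$, and each internal node contributes a nested time integration together with an alternating-sum convolution constraint on its three children. Two ingredients then control the sum. First, the number of rooted ternary trees with $k$ internal nodes is the Fuss-Catalan number $\tfrac{1}{2k+1}\binom{3k}{k}$, whose $k$-th root tends to $27/4$. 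Second, unwinding the convolutions up to the root yields a single identity $m=\sum_{i=1}^{2k+1}(\pm 1)\, m_{i}$, so $|m|\le\sum_{i}|m_{i}|$ and similarly for $n$; splitting $\prod_{i}e^{-\kappa_{j}|m_{i}|}\le e^{-\kappa_{j}|m|/2}\prod_{i}e^{-\kappa_{j}|m_{i}|/2}$ and summing the free leaf indices produces a geometric factor $\sum_{\ell\in\mathbb Z^{\nu_{j}}}e^{-\kappa_{j}|\ell|/2}\le (6/\kappa_{j})^{\nu_{j}}$ per iteration (valid for $\kappa_{j}\in(0,1]$). Combined with the crude volume bound $t^{k}$ for the nested time integrations, one arrives at
\begin{equation*}
|\ca_{k}(t,m,n)|\le e^{-\kappa_{1}|m|/2-\kappa_{2}|n|/2}\cdot\bigl(\tfrac{27}{4}(6/\kappa_{1})^{\nu_{1}}(6/\kappa_{2})^{\nu_{2}}|\varepsilon|t\bigr)^{k},
\end{equation*}
which is geometrically summable in $k$ exactly when $t<T_{\varepsilon}$. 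Summing the series gives existence and the uniform $(\kappa_{1}/2,\kappa_{2}/2)$-exponential decay of $\cc(t,m,n)$; applying the same tree-level estimate to the difference of two candidate solutions (or iterating the Duhamel map as a contraction) yields uniqueness on $[0,T_{\varepsilon}]$.

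\textbf{Asymptotics, and the main obstacle.} The $k=0$ term is exactly $\ca_{0}(m,n)=c(m,n)$, which after undoing the interaction picture returns the linear solution $\mathfrak u_{0}$; hence
\begin{equation*}
\cc(t,m,n)-e^{-\mathrm i\lambda(m,n)t}c(m,n)=e^{-\mathrm i\lambda(m,n)t}\sum_{k\ge 1}\ca_{k}(t,m,n),
\end{equation*}
and the above majorant gives $|\cc(t,m,n)-e^{-\mathrm i\lambda t}c(m,n)|\lesssim|\varepsilon|t\cdot e^{-\kappa_{1}|m|/2-\kappa_{2}|n|/2}$ on $[0,T_{\varepsilon}/2]$. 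Multiplying by $e^{\rho_{1}|m|+\rho_{2}|n|}$ and invoking the hypothesis $0<\kappa_{j}/2-2\rho_{j}\le 1$ leaves residual decay $e^{-(\kappa_{j}/2-\rho_{j})|\cdot|}$ with margin $\ge \rho_{j}$, which is ample for $\ell^{2}$-summability over $\mathbb Z^{\nu_{1}}\times\mathbb Z^{\nu_{2}}$; thus $\|\mathfrak u-\mathfrak u_{0}\|_{\mathcal H^{(\rho_{1},\rho_{2})}(\mathbb R^{2})}\lesssim|\varepsilon|t$, which tends to $0$ like $|\varepsilon|^{\eta}$ when $t\sim|\varepsilon|^{-1+\eta}$. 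The hardest part is the bookkeeping in the tree estimate: the halving from $\kappa_{j}$ to $\kappa_{j}/2$ must be orchestrated so that every free leaf index is genuinely summable while the pointwise weight $e^{-\kappa_{j}|m|/2}$ survives at the root, and the Fuss-Catalan growth rate $27/4$ must be matched exactly against the geometric factor $(6/\kappa_{j})^{\nu_{j}}$ in order for the threshold $T_{\varepsilon}$ to take the stated sharp form.
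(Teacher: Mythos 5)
Your proposal is essentially correct and follows the paper's strategy closely: reduce \eqref{NLS2} to the infinite coupled ODE system (via \autoref{orglemm}), iterate the Duhamel integral equation, expand the iterates as a sum over cubic trees with initial data on the leaves, split the exponential decay $\kappa_j\to\kappa_j/2$ to retain a pointwise weight at the root while summing free leaf indices, control the number of trees, and read off the asymptotic statement from the nonlinear tail. The two organizational differences worth noting: first, you index the formal expansion by the number $k$ of internal cubic nodes, which makes the object a genuine power series in $\varepsilon t$; the paper instead indexes by Picard iteration depth (Theorems \ref{treethm}, \ref{lemmcd}) and then sums over the branch set $\Gamma^{(k)}$, proving convergence via a Cauchy estimate (Theorem \ref{lems}) rather than absolute summability of the node-indexed series. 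Second, you bound the nested time integrals crudely by $t^{k}$ and control the number of trees via the Fuss--Catalan number $\frac{1}{2k+1}\binom{3k}{k}$, whereas the paper retains the tree-factorial denominator $\mathfrak D(\gamma^{(k)})$ coming from the nested integrations and invokes the generating-function lemma from \cite{DLX1} to sum over $\Gamma^{(k)}$ up to the closed endpoint $t=T_\varepsilon$. Both routes produce the same $\mathcal O(\varepsilon^{-1})$ time scale; your crude bound gives the open interval, the paper's tree-factorial gives the closed one, but this is irrelevant for the asymptotic regime $t\sim|\varepsilon|^{-1+\eta}\ll T_\varepsilon$.

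One bookkeeping caution. Each internal cubic node contributes a net of $2$ new leaf indices; with $2k+1$ leaves and a single convolution constraint at the root, there are $2k$ free indices, so the geometric factor from summing free leaves should scale as $\bigl((6/\kappa_1)^{\nu_1}(6/\kappa_2)^{\nu_2}\bigr)^{2k}$ rather than the single power appearing in your displayed majorant. This affects only the constant inside $T_\varepsilon$, not the $\varepsilon^{-1}$ scaling or the asymptotic conclusion. You are in good company: the paper's own passage in the proof of Theorem \ref{lemmcd} from $(6\kappa_1^{-1})^{2\sigma(\gamma^{(k)})\nu_1}$ with $2\sigma=2\ell+1$ to the displayed factor $\bigl((6\kappa_1^{-1})^{\nu_1}(6\kappa_2^{-1})^{\nu_2}\varepsilon t\bigr)^{\ell(\gamma^{(k)})}$ carries the same ambiguity in the exponent.
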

\begin{remark}
(a)~\autoref{thm} applies to $(d,p,\varepsilon)\in[1,\infty)\times[1,\infty)\times\{\pm1\}$. This includes the following two important cases:
\begin{itemize}
  \item Mass-critical case: $2p=4/d$, e.g., \cite[$(1,2)$]{F2024CULA,P2024,DLX1}, \cite[$(2,1)$]{KTV09JEMS,S2024}; see \autoref{m1}.
  \item Energy-critical case: $2p=4/(d-2)$, e.g., \cite[$(3,2)$]{B99JAMS,G2000}, \cite[$(4,1)$]{RV07AJM,DLM19}; see \autoref{m2}.
\end{itemize}
\begin{figure}[H]
\centering
\begin{minipage}{0.45\textwidth}
\centering
\begin{tikzpicture}[scale=0.5,transform shape]
\node at (1,1) {\color{blue}$\bullet$};
\node at (1,2) {\color{red}$\bullet$};
\node at (1,3) {$\bullet$};
\node at (1,4) {$\bullet$};
\node at (1,5) {$\bullet$};
\node at (1,6) {$\bullet$};

\node at (2,1) {\color{red}$\bullet$};
\node at (2,2) {$\bullet$};
\node at (2,3) {$\bullet$};
\node at (2,4) {$\bullet$};
\node at (2,5) {$\bullet$};
\node at (2,6) {$\bullet$};

\node at (3,1) {$\bullet$};
\node at (3,2) {$\bullet$};
\node at (3,3) {$\bullet$};
\node at (3,4) {$\bullet$};
\node at (3,5) {$\bullet$};
\node at (3,6) {$\bullet$};

\node at (4,1) {$\bullet$};
\node at (4,2) {$\bullet$};
\node at (4,3) {$\bullet$};
\node at (4,4) {$\bullet$};
\node at (4,5) {$\bullet$};
\node at (4,6) {$\bullet$};

\node at (5,1) {$\bullet$};
\node at (5,2) {$\bullet$};
\node at (5,3) {$\bullet$};
\node at (5,4) {$\bullet$};
\node at (5,5) {$\bullet$};
\node at (5,6) {$\bullet$};

\node at (6,1) {$\bullet$};
\node at (6,2) {$\bullet$};
\node at (6,3) {$\bullet$};
\node at (6,4) {$\bullet$};
\node at (6,5) {$\bullet$};
\node at (6,6) {$\bullet$};
    \draw[->] (0,0) -- (6.5,0) node[right] {$d$};
    \draw[->] (0,0) -- (0,6.5) node[above] {$p$};

    \foreach \x in {1,2,3,4,5,6} 
        \draw (\x,1pt) -- (\x,-3pt) node[below] {\x}; 

    \foreach \y in {1,2,3,4,5,6} 
        \draw (1pt,\y) -- (-3pt,\y) node[left] {\y}; 

    \draw[black, dashed, domain=0.35:6.35, samples=100] plot (\x, {2/\x}) node[right] {$2p= \frac{4}{d}$};
\end{tikzpicture}
\captionof{figure}{\small Mass-critical case}
\label{m1}
\end{minipage}
\begin{minipage}{0.45\textwidth}
\centering
\begin{tikzpicture}[scale=0.5,transform shape]
\node at (3,1) {\color{blue}$\bullet$};
\node at (3,2) {\color{red}$\bullet$};
\node at (3,3) {$\bullet$};
\node at (3,4) {$\bullet$};
\node at (3,5) {$\bullet$};
\node at (3,6) {$\bullet$};

\node at (4,1) {\color{red}$\bullet$};
\node at (4,2) {$\bullet$};
\node at (4,3) {$\bullet$};
\node at (4,4) {$\bullet$};
\node at (4,5) {$\bullet$};
\node at (4,6) {$\bullet$};

\node at (5,1) {$\bullet$};
\node at (5,2) {$\bullet$};
\node at (5,3) {$\bullet$};
\node at (5,4) {$\bullet$};
\node at (5,5) {$\bullet$};
\node at (5,6) {$\bullet$};

\node at (6,1) {$\bullet$};
\node at (6,2) {$\bullet$};
\node at (6,3) {$\bullet$};
\node at (6,4) {$\bullet$};
\node at (6,5) {$\bullet$};
\node at (6,6) {$\bullet$};

\node at (7,1) {$\bullet$};
\node at (7,2) {$\bullet$};
\node at (7,3) {$\bullet$};
\node at (7,4) {$\bullet$};
\node at (7,5) {$\bullet$};
\node at (7,6) {$\bullet$};
    \draw[->] (0,0) -- (7.5,0) node[right] {$d$}; 
    \draw[->] (0,0) -- (0,6.5) node[above] {$p$}; 

    \foreach \x in {2,3,4,5,6,7} 
        \draw (\x,1pt) -- (\x,-3pt) node[below] {\x}; 

    \foreach \y in {1,2,3,4,5,6} 
        \draw (1pt,\y) -- (-3pt,\y) node[left] {\y}; 

    \draw[black, dashed, domain=2.35:7.5, samples=100] plot (\x, {2/(\x-2)}) node[right] {$2p= \frac{4}{d-2}$};

\end{tikzpicture}
\captionof{figure}{\small Energy-critical case}
\label{m2}
\end{minipage}
\end{figure}

(b)~\autoref{thm} applies to the case of polynomially decaying initial Fourier data, that is, \eqref{ic} is replaced by
\begin{align*}
|c(m,n)|\leq(1+|m|)^{-r_1}(1+|n|)^{-r_2},\quad(m,n)\in\mathbb Z^{\nu_1}\times\mathbb Z^{\nu_2},
\end{align*}
for sufficient large $r_1$ and $r_2$; see \cite{S2024}, in which he proved the sharp polynomial regularity for the cubic NLS equation in two spatial dimensions.

(c)~Bourgain studied the time quasi-periodic solutions to Hamiltonian perturbations of the two-dimensional linear Schr\"odinger equation in a periodic setting; see \cite{b98}. In contrast, we investigated the asymptotic behavior of higher-dimensional weakly nonlinear Schr\"odinger equations in a quasi-periodic setting, without time periodicity.
\end{remark}


The proof of \autoref{thm} follows the subsequent process:
\[
\xymatrix{
\boxed{\substack{\text{Reduction of a nonlinear PDE with spatially quasi-periodic Fourier series }\\[0.5mm]\text{to a nonlinear system of infinite coupled ODE}}}
\ar[d]\\
\boxed{\text{Picard iteration}}
\ar[d]
\ar[r]&
\boxed{\text{Combinatorial tree}}\ar[d]
\\
\boxed{\text{Cauchy sequence}}\ar[d]&
\boxed{\text{Exp/poly decay}}
\ar[l]\ar[d]
\\
\boxed{\text{Existence}}&\boxed{\text{Uniqueness}}
}
\]

The proof employs an explicit combinatorial analysis method to examine the Picard iteration, a technique successfully utilized in one-dimensional problems; see \cite{C2007,DG,DLX24JMPA,DLX1,DLX2}. In this paper, we extend this method to address the higher-dimensional problem.

In the higher-dimensional case, the primary challenge lies in analyzing the combinatorial structure governing the growth of the Picard iteration, which is influenced by the increase in spatial dimensions. This contrasts with the one-dimensional case, where such complexity is less pronounced. To address this difficulty, we introduce a diagram representation method to facilitate the analysis of the Picard iteration. The key idea is to separate the elements in the domain based on their spatial direction.

To be more precise, each diagram corresponds to the nonlinear part of an element in the Picard sequence. In each diagram, we identify the domain of the multi-linear operator in the context of the Picard iteration as follows: we first gather all the dual lattice points associated with the same spatial direction into a cohesive object, then pull them back to the original domain. These two processes are mutually reversible. This split-combine method clarifies the combinatorial structure of the Picard iteration, making the summation function and necessary estimates feasible. We refer the reader to \autoref{fig0}, \autoref{fig1}, \autoref{fig3}, and \autoref{fig4} for further details. Additionally, in the proof of uniqueness, we utilize such a diagram to re-label the lattice points; see \autoref{fig5}, \autoref{fig87}, and \autoref{fig7} for specifics.

%

%
%

\vspace{2mm}

{\bf Outline Of The Paper}~In \autoref{seccub}, we examine the cubic NLS equation in two dimensions. This section will be divided into \autoref{ode}, \autoref{sectree}, and \autoref{secdecay}, where we will derive the nonlinear system of infinite coupled ODEs that arises from the nonlinear NLS equation, along with the spatially quasi-periodic Fourier series. We will also establish the combinatorial tree structure for the Picard sequence and demonstrate that the Picard sequence exhibits uniform-in-time exponential decay. In \autoref{cau}, we will show that the Picard sequence is a Cauchy sequence. Finally, in \autoref{secasy}, we will prove the main asymptotic result.

\section{The Cubic NLS Equation in Two Dimensions: $(d,p)=(2,1)$}\label{seccub}
In this section we focuses on the simplest case: $(d,p)=(2,1)$, which corresponds to \eqref{NLS2}-\eqref{ID2}.
\subsection{The Nonlinear System of Infinite Coupled ODEs}\label{ode}
The spatially quasi-periodic solutions with the same frequency matrix diag$\{\omega^\top,{\omega^\prime}^{\top}\}$ for the above Cauchy problem have the following Fourier expansion
\begin{align}\label{S2}
u(t,x,y)=
\sum_{(m,n)\in\mathbb Z^{\nu_1}\times\mathbb Z^{\nu_2}}c(t,m,n)
e^{{\rm i}\big(\langle m,\omega\rangle x+\langle n,\omega^\prime\rangle y\big)}, \quad (x,y)\in\mathbb R^2.
\end{align}

By Lemma \ref{orglemm}, substituting \eqref{S2} into \eqref{NLS2} yields the following nonlinear system of infinite coupled ODEs
\begin{align*}
{\rm i}\partial_tc(t,m,n)&-\left(\langle m,\omega\rangle^2+\langle n,\omega^\prime\rangle^2\right)c(t,m,n)\nonumber\\
&+\varepsilon
\sum_{\substack{(m,n)=\sum_{j=1}^3(-1)^{j-1}(m_j,n_j)\\(m_j,n_j)\in\mathbb Z^{\nu_1}\times\mathbb Z^{\nu_2}\\j=1,2,3}}\prod_{j=1}^{3}\left\{c(t,m_j,n_j)\right\}^{\ast^{[j-1]}}=0,
\end{align*}
where, for any complex number $z\in\mathbb C$,
\begin{align*}
z^{\ast^{[m]}}=
\begin{cases}
z,& m\in2\mathbb N;\\
\bar{z}, & m\in2\mathbb N+1.
\end{cases}
\end{align*}
Some operational properties of $^{\ast^{[\cdot]}}$ can be found in \cite[Subsection 2.2]{DLX1}.

According to the initial data \eqref{ID2}, we have $c(0,m,n)=c(m,n)$ for all $(m,n)\in\mathbb Z^{\nu_1}\times\mathbb Z^{\nu_2}$.

It follows from the Duhamel's principle that
\begin{align*}
c(t,m,n)&=\Phi^{t}(m,n)c(m,n)\\
&+{\rm i}\varepsilon\int_0^t\Phi^{t-s}(m,n)
\sum_{\substack{(m,n)=\sum_{j=1}^3(-1)^{j-1}(m_j,n_j)\\(m_j,n_j)\in\mathbb Z^{\nu_1}\times\mathbb Z^{\nu_2}\\j=1,2,3}}\prod_{j=1}^{3}\left\{c(s,m_j,n_j)\right\}^{\ast^{[j-1]}}{\rm d}s,
\end{align*}
where $\Phi^t(m,n)=e^{-{\rm i}\big(\langle m,\omega\rangle^2+\langle n,\omega^\prime\rangle^2\big)t}$.

Define the Picard sequence $\{c_k(t,m,n)\}_{k\geq0}$ as follows: start with the linear solution as the initial guess, that is, $c_0(t,m,n)=\Phi^t(m,n)c(m,n)$. For $k\geq1$,
\begin{align}
&c_k(t,m,n)\nonumber\\
=~&c_0(t,m,n)\nonumber\\
&+{\rm i}\varepsilon\int_0^t\Phi^{t-s}(m,n)
\sum_{\substack{(m,n)=\sum_{j=1}^3(-1)^{j-1}(m_j,n_j)\\(m_j,n_j)\in\mathbb Z^{\nu_1}\times\mathbb Z^{\nu_2}\\j=1,2,3}}\prod_{j=1}^{3}\left\{c_{k-1}(s,m_j,n_j)\right\}^{\ast^{[j-1]}}{\rm d}s.\label{pi}
\end{align}

\subsection{Combinatorial Tree of the Picard Iteration}\label{sectree}
In this subsection, we will prove that the Picard sequence takes the form of a combinatorial tree, as detailed in \autoref{treethm}.
\begin{theorem}\label{treethm}
For all $k\geq1$, we have
\begin{align}\label{ctree}
c_k(t,m,n)=\sum_{\gamma^{(k)}\in\Gamma^{(k)}}
\sum_{\substack{\spadesuit^{(k)}\in{\mathrm D}^{(k,\gamma^{(k)})}\\\lambda(\spadesuit^{(k)})=(m,n)}}
\mathfrak C^{(k,\gamma^{(k)})}(\spadesuit^{(k)})
\mathfrak I^{(k,\gamma^{(k)})}(t,\spadesuit^{(k)})
\mathfrak F^{(k,\gamma^{(k)})}(\spadesuit^{(k)}).
\end{align}
\end{theorem}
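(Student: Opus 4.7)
The plan is to prove \eqref{ctree} by induction on the Picard index $k$, after first fixing the combinatorial data $(\Gamma^{(k)}, \mathrm{D}^{(k,\gamma^{(k)})}, \lambda, \mathfrak{C}, \mathfrak{I}, \mathfrak{F})$ explicitly. Concretely, $\Gamma^{(k)}$ should be a finite set of rooted ordered ternary trees of depth at most $k$ (the root has three children reflecting the cubic nonlinearity in \eqref{pi}, and each internal node may itself branch into three), $\mathrm{D}^{(k,\gamma^{(k)})}$ should index the labelings of the leaves of $\gamma^{(k)}$ by elements of $\mathbb Z^{\nu_1}\times\mathbb Z^{\nu_2}$, and $\lambda(\spadesuit^{(k)})$ should be the ``net momentum'' obtained by combining leaf labels along the tree with the signs $(-1)^{j-1}$ dictated by \eqref{pi}. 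The scalar $\mathfrak{C}^{(k,\gamma^{(k)})}$ collects a factor of $\mathrm{i}\varepsilon$ per internal vertex, the product $\mathfrak{F}^{(k,\gamma^{(k)})}(\spadesuit^{(k)})$ collects a factor $c(m_\ell,n_\ell)^{\ast^{[\cdot]}}$ per leaf, and $\mathfrak{I}^{(k,\gamma^{(k)})}(t,\spadesuit^{(k)})$ is the nested oscillatory integral in the variables $s_1,\dots,s_k$ formed by iterating $\Phi^{t-s}$ along the tree. The linear term $c_0$ is encoded as the degenerate tree of depth $0$ (a root labeled by $(m,n)$ with no children).

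The base case $k=1$ follows by substituting $c_0(s,m_j,n_j)=\Phi^s(m_j,n_j)c(m_j,n_j)$ directly into \eqref{pi}: the resulting expression is manifestly of the form \eqref{ctree} with $\Gamma^{(1)}$ consisting of the degenerate tree (giving the $c_0$ summand) and the unique tree with a root and three leaves (giving the integral summand). For the inductive step, assume \eqref{ctree} for $c_{k-1}$ and substitute it into \eqref{pi}. The three factors $\{c_{k-1}(s,m_j,n_j)\}^{\ast^{[j-1]}}$, $j=1,2,3$, each expand into a sum over trees $\gamma_j^{(k-1)}\in\Gamma^{(k-1)}$ and labelings $\spadesuit_j^{(k-1)}$ whose $\lambda$-image equals $(m_j,n_j)$. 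Multiplying out and exchanging the finite sums with the triple sum in $(m_j,n_j)$ and the time integral, the constraint $(m,n)=\sum_{j=1}^3(-1)^{j-1}(m_j,n_j)$ is absorbed into the single constraint $\lambda(\spadesuit^{(k)})=(m,n)$ for the new tree $\gamma^{(k)}$ obtained by grafting $\gamma_1^{(k-1)},\gamma_2^{(k-1)},\gamma_3^{(k-1)}$ beneath a common new root. The three factors of $\mathfrak{C},\mathfrak{F}$ multiply, the new root contributes an extra $\mathrm{i}\varepsilon$, and the outer $\int_0^t\Phi^{t-s}(m,n)\,(\cdot)\,\mathrm{d}s$ merges the three subtree integrals into $\mathfrak{I}^{(k,\gamma^{(k)})}$.

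The one genuinely delicate point is tracking the complex-conjugation pattern $\ast^{[\,\cdot\,]}$ through the recursion: when $\{c_{k-1}\}^{\ast^{[j-1]}}$ is expanded, the conjugation must be propagated into every leaf factor of the subtree $\gamma_j^{(k-1)}$, and the parity indices combine additively modulo $2$ along each root-to-leaf path. This is exactly the bookkeeping handled by the operational identities for $\ast^{[\,\cdot\,]}$ from \cite[Subsection 2.2]{DLX1}; encoded once into the definition of $\mathfrak{F}^{(k,\gamma^{(k)})}$ as a product over leaves with parity determined by the ancestral path, the induction closes. This is the main potential pitfall and the reason the diagrammatic language is needed—once the trees record the parities correctly, the remainder of the step is just a reordering of finite sums and a Fubini interchange with the $s$-integral.
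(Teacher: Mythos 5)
Your proposal is correct and follows essentially the same route as the paper: recursively define $\Gamma^{(k)}$ as a set of ternary branches (equivalently rooted ternary trees, with the degenerate depth-$0$ tree encoding the linear term $c_0$), induct on $k$ by substituting the hypothesis into the Picard recursion \eqref{pi}, graft the three subtrees under a new root to absorb the triple convolution constraint into the single condition $\lambda(\spadesuit^{(k)})=(m,n)$, and propagate the conjugation parity $\ast^{[\,\cdot\,]}$ multiplicatively (additively mod $2$) along each root-to-leaf path. The only discrepancy is a naming swap: in the paper $\mathfrak C^{(k,\gamma^{(k)})}$ denotes the product of initial Fourier data $c(\mathfrak m_j,\mathfrak n_j)^{\ast^{[j-1]}}$ over leaves, while $\mathfrak F^{(k,\gamma^{(k)})}$ collects the ${\rm i}\varepsilon$ factors contributed by internal vertices---the reverse of your convention---which matters when these two symbols are separately estimated in the subsequent decay lemmas.
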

\begin{proof}
We divide the proof into the following five steps.

{\bf Step 1. Label each term in the Picard iteration.}

Define the branch set $\{\Gamma^{(k)}\}_{k\geq1}$ is defined as follows:
\begin{align}
\Gamma^{(k)}=
\begin{cases}
\{0,1\},&k=1;\\
\{0\}\cup\Gamma^{(k-1)}\times\Gamma^{(k-1)}\times\Gamma^{(k-1)},&k\geq2.
\end{cases}
\end{align}

{\bf Step 2. Analysis of the Linear Part.}

Notice that the linear part $c_0(t,m,n)$ can be labeled by $0\in\Gamma^{(k)}$, where $k\geq1$. To be precise,
\begin{align}\label{c0l}
c_0(t,m,m)=\sum_{\gamma^{(k)}=0\in\Gamma^{(k)}}
\sum_{\substack{\lambda(\spadesuit^{(k)})=(m,n)\\\spadesuit^{(k)}\in{\mathrm D}^{(k,\gamma^{(k)})}}}
\mathfrak C^{(k,\gamma^{(k)})}(\spadesuit^{(k)})\mathfrak I^{(k,\gamma^{(k)})}(t,\spadesuit^{(k)})\mathfrak F^{(k,\gamma^{(k)})}(\spadesuit^{(k)}).
\end{align}
where the definitions of $\mathrm D,\lambda,\mathfrak C,\mathfrak I$ and $\mathfrak F$ are provided in the following \autoref{fig0}.
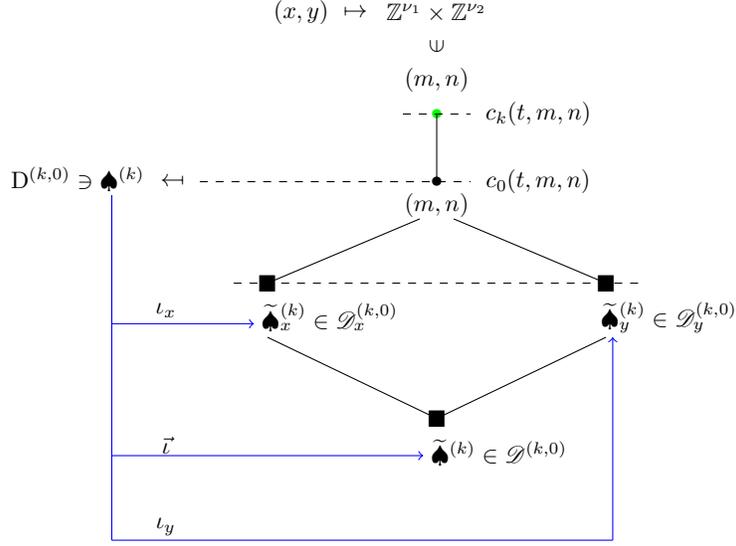
\begin{figure}[htpb!]
\centering
\begin{tikzpicture}[scale=0.9,transform shape]
\node at (-2,2.5) {$(x,y)$};
\node at (-1.2,2.5) {$\mapsto$};
\node at (0,2.5) {$\mathbb Z^{\nu_1}\times\mathbb Z^{\nu_2}$};
\node at (0,2) {\rotatebox{90}{$\in$}};
\node at (0,1.5) {$(m,n)$};
\node at (0,1) {\color{green}$\bullet$};
\draw [dashed] (-0.5,1) -- (0.5,1);
\node at (1.5,1) {$c_k(t,m,n)$};
\draw (0,0) -- (0,1);
\node at (0,0) {$\bullet$};
\draw [dashed] (-3.5,0) -- (0.5,0);
\node at (0,-0.35) {$(m,n)$};
\node at (1.5,0) {$c_0(t,m,n)$};

\node at (-5.3,0.04) {$\mathrm D^{(k,0)}\ni\spadesuit^{(k)}$};
\node at (-3.9,0.04) {\rotatebox{180}{$\mapsto$}};

\coordinate (A) at (-4.8,-0.2);
\coordinate (B) at (-4.8,-2.1);
\coordinate (C) at (-2.7,-2.1);

\coordinate (D) at (-4.8,-4.05);
\coordinate (E) at (-0.2,-4.05);

\coordinate (F) at (-4.8,-5.3);
\coordinate (G) at (2.6,-5.3);
\coordinate (H) at (2.6,-2.3);

\draw [blue] (A)--(B);
\draw [->,blue] (B)--(C);
\draw [blue] (A)--(D);
\draw [->,blue] (D)--(E);
\draw [blue] (A)--(F);
\draw [blue] (F)--(G);
\draw [->,blue] (G)--(H);

\node at (-2.5,-1.5) {$\blacksquare$};
\node at (2.5,-1.5) {$\blacksquare$};

\draw (-0.25,-0.55)--(-2.5,-1.5);
\draw (0.25,-0.55)--(2.5,-1.5);
\draw [dashed](-3,-1.5)--(3,-1.5);

\node at (-1.58,-2) {$\widetilde\spadesuit_x^{(k)}\in\mathscr D_x^{(k,0)}$};
\node at (3.43,-2) {$\widetilde\spadesuit_y^{(k)}\in\mathscr D_y^{(k,0)}$};

\node at (-4,-1.9) {$\iota_x$};
\node at (-4,-3.9) {$\vec{\iota}$};
\node at (-4,-5.1) {$\iota_y$};

\node at (0,-3.5) {$\blacksquare$};
\draw (-2.5,-2.3)--(0,-3.5);
\draw (2.5,-2.3)--(0,-3.5);

\node at (0.92,-4) {$\widetilde\spadesuit^{(k)}\in\mathscr D^{(k,0)}$};
\end{tikzpicture}
\caption{Analysis of the linear part $c_0(t,m,n)$. The green circle ${\color{green}\bullet}$ in the first dashed line represents the dual variables $(m,n)$ in $c_k(t,m,n)$, the black circle $\bullet$ in the second dashed line represents the variables in the linear part $c_0(t,m,n)$.
The black circle $\bullet$ and the green circle ${\color{green}\bullet}$ denote elements associated with $\mathbb Z^{\nu_1}\times\mathbb Z^{\nu_2}$. The black square $\blacksquare$ denote elements associated with $\mathbb Z^{\nu_1}$ or $\mathbb Z^{\nu_2}$. The blue lines represent the mappings $\vec{\iota},\iota_x$ and $\iota_y$.}
\label{fig0}
\end{figure}

Firstly, on the branch $0\in\Gamma^{(k)}$, the domain $\mathrm D$ is defined as follows:
\begin{align*}
{\mathrm D}^{(k,0)}=\mathbb Z^{\nu_1}\times\mathbb Z^{\nu_2}
\stackrel{\vec{\iota}}{\simeq}
\mathscr D_x^{(k,0)}\times\mathscr D_y^{(k,0)}=\mathscr D^{(k,0)},
\end{align*}
where
$\mathscr D_x^{(k,0)}=\mathbb Z^{\nu_1},
\mathscr D_y^{(k,0)}=\mathbb Z^{\nu_2}$,
and
\begin{align*}
(\iota_x,\iota_y)=\vec{\iota}:\mathrm D^{(k,0)}&\rightarrow\mathscr D^{(k,0)}\\
\mathbb Z^{\nu_1}\times\mathbb Z^{\nu_2}\ni(m,n)=\spadesuit^{(k)}&\mapsto\widetilde\spadesuit^{(k)}=\vec{\iota}(\spadesuit^{(k)})\\
&\hspace{13.5mm}=\left(\iota_x(\spadesuit^{(k)}),\iota_y(\spadesuit^{(k)})\right)\\
&\hspace{13.5mm}=(\widetilde\spadesuit_x^{(k)},\widetilde\spadesuit_y^{(k)})\\
&\hspace{13.5mm}=(m,n).
\end{align*}

Secondly, on the branch $0\in\Gamma^{(k)}$, the summation function $\lambda$ is defined as follows:
\begin{align*}
\lambda(\spadesuit^{(k)})&=(\vec{\cc\ca\cs}\circ\vec{\iota})(\spadesuit^{(k)})\\
&=\left((\cc\ca\cs_x\circ\iota_x)(\spadesuit^{(k)}),(\cc\ca\cs_y\circ\iota_y)(\spadesuit^{(k)})\right)\\
&=\left(\cc\ca\cs_x(\widetilde\spadesuit_x^{(k)}),\cc\ca\cs_y(\widetilde\spadesuit_y^{(k)})\right)\\
&=(m,n),\nonumber
\end{align*}
where $\vec{\cc\ca\cs}=(\cc\ca\cs_x,\cc\ca\cs_y)$, and
\begin{align*}
\cc\ca\cs_x: \cup_{s=1}^\infty(\mathbb Z^{\nu_1})^s&\rightarrow\mathbb Z^{\nu_1},\\
(\mathbb Z^{\nu_1})^s\ni(q_1,\cdots,q_s)&\mapsto\sum_{j=1}^{s}(-1)^{j-1}q_j;\\
\cc\ca\cs_y: \cup_{s=1}^\infty(\mathbb Z^{\nu_2})^s&\rightarrow\mathbb Z^{\nu_2},\\
 (\mathbb Z^{\nu_2})^s\ni(q_1,\cdots,q_s)&\mapsto\sum_{j=1}^{s}(-1)^{j-1}q_j.
\end{align*}

Finally,
\begin{align*}
\mathfrak C^{(k,0)}(\spadesuit^{(k)})&=c(\lambda(\spadesuit^{(k)}));\\
\mathfrak I^{(k,0)}(t,\spadesuit^{(k)})&=\Phi^{t}(\lambda(\spadesuit^{(k)}));\\
\mathfrak F^{(k,0)}(\spadesuit^{(k)})&=1.
\end{align*}

{\bf Step 3. Analysis of the First Iteration}~

For the definitions of $\mathrm D^{(k,\gamma^{(k)})}, \lambda, \mathfrak C,\mathfrak I$ and $\mathfrak F$ on the branch $0\in\Gamma^{(k)}$, we refer to {\bf Step 2}.

For the nonlinear part $(c_1-c_0)(t,m,n)$, it follows from \eqref{pi} that
\begin{align*}
&(c_1-c_0)(t,m,n)\\
=~&{\rm i}\varepsilon\int_0^t\Phi^{t-s}(m,n)
\sum_{\substack{(m,n)=\sum_{j=1}^3(-1)^{j-1}(m_j,n_j)\\(m_j,n_j)\in\mathbb Z^{\nu_1}\times\mathbb Z^{\nu_2}\\j=1,2,3}}\prod_{j=1}^{3}\left\{c_{0}(s,m_j,n_j)\right\}^{\ast^{[j-1]}}{\rm d}s\\
=~&\sum_{\substack{(m,n)=\sum_{j=1}^3(-1)^{j-1}(m_j,n_j)\\(m_j,n_j)\in\mathbb Z^{\nu_1}\times\mathbb Z^{\nu_2}\\j=1,2,3}}\prod_{j=1}^{3}\left\{c(m_j,n_j)\right\}^{\ast^{[j-1]}}\\
~&\times
\int_0^t\Phi^{t-s}(m,n)\prod_{j=1}^{3}\left\{\Phi^s(m_j,n_j)\right\}^{\ast^{[j-1]}}{\rm d}s\times({\rm i}\varepsilon).
\end{align*}

We use $1\in\Gamma^{(1)}$ to label the nonlinear part $(c_1-c_0)(t,m,n)$ of the first iteration $c_1(t,m,n)$. Therefore, on this branch, we need definitions for the domain of the multi-linear operator, combination of the summation condition and each term in $c_1(t,m,n)$.

For the domain of the multi-linear operator, refer to \autoref{fig1} for details.
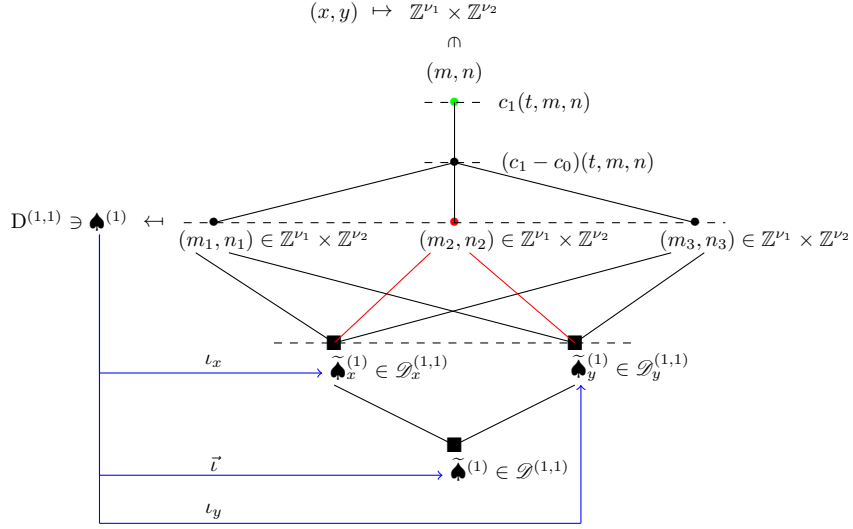
\begin{figure}[htpb!]
\centering
\begin{tikzpicture}[scale=0.8,transform shape]

\node at (2,10)   {$(x,y)$};
\node at (2.8,10) {$\mapsto$};
\node at (4,10) {$\mathbb Z^{\nu_1}\times\mathbb Z^{\nu_2}$};
\node at (4,9.5) {\rotatebox{-90}{$\in$}};
\node at (4,9) {$(m,n)$};

\node at (4,8.5) {\color{green}$\bullet$};
\draw [dashed] (3.5,8.5)--(4.5,8.5);
\node at (5.5,8.5) {$c_1(t,m,n)$};
\draw (4,8.5)--(4,7.5);
\node at (4,7.5) {$\bullet$};
\draw [dashed] (3.5,7.5)--(4.5,7.5);
\node at (6.05,7.5) {$(c_1-c_0)(t,m,n)$};

\node at (0,6.5) {$\bullet$};
\node at (4,6.5) {\color{red}$\bullet$};
\node at (8,6.5) {$\bullet$};

\node at (2,4.5) {$\blacksquare$};
\node at (6,4.5) {$\blacksquare$};

\draw (-0.3,6)--(2,4.5);
\draw [red](3.6,6)--(2,4.5);
\draw (7.6,6)--(2,4.5);

\draw (0.25,6)--(6,4.5);
\draw [red](4.25,6)--(6,4.5);
\draw (8.15,6)--(6,4.5);

\draw [dashed](1,4.5)--(7,4.5);
\node at (-2.4,6.54) {$\mathrm D^{(1,1)}\ni\spadesuit^{(1)}$};
\node at (-1,6.54) {\rotatebox{180}{$\mapsto$}};
\node at (1,6.2) {$(m_1,n_1)\in\mathbb Z^{\nu_1}\times\mathbb Z^{\nu_2}$};
\node at (5,6.2) {$(m_2,n_2)\in\mathbb Z^{\nu_1}\times\mathbb Z^{\nu_2}$};
\node at (9,6.2) {$(m_3,n_3)\in\mathbb Z^{\nu_1}\times\mathbb Z^{\nu_2}$};

\draw (0,6.5) -- (4,7.5);
\draw (4,6.5) -- (4,7.5);
\draw (8,6.5) -- (4,7.5);

\draw [dashed](-0.5,6.5)--(8.5,6.5);

\node at (2.92,4.1) {$\widetilde\spadesuit_x^{(1)}\in\mathscr D_x^{(1,1)}$};
\node at (6.92,4.1) {$\widetilde\spadesuit_y^{(1)}\in\mathscr D_y^{(1,1)}$};

\node at (4,2.8) {$\blacksquare$};

\draw (2,3.8)--(4,2.8);
\draw (6,3.8)--(4,2.8);

\node at (4.92,2.4) {$\widetilde\spadesuit^{(1)}\in\mathscr D^{(1,1)}$};

%
%
\coordinate (A) at (-1.9,6.3);
\coordinate (B) at (-1.9,4);
\coordinate (C) at (-1.9,2.3);
\coordinate (D) at (-1.9,1.5);

\coordinate (E) at (1.8,4);
\coordinate (F) at (3.8,2.3);
\coordinate (G) at (6.1,1.5);
\coordinate (H) at (6.1,3.8);

\draw [blue] (A)--(B);
\draw [->,blue] (B)--(E);
\draw [blue] (A)--(C);
\draw [->,blue] (C)--(F);
\draw [blue] (A)--(D);
\draw [blue] (D)--(G);
\draw [->,blue] (G)--(H);

\node at (0,4.2) {$\iota_x$};
\node at (0,2.5) {$\vec{\iota}$};
\node at (0,1.7) {$\iota_y$};
\end{tikzpicture}
\caption{Analysis of the nonlinear part $(c_1-c_0)(t,m,n)$ for the first iteration $c_1(t,m,n)$. The green circle {\color{green}$\bullet$}, the black circle $\bullet$, the black square $\blacksquare$ and the blue lines have the similar meanings as in \autoref{fig0}. The red points and lines are associated with the minus ``-". For example, the red circle ${\color{red}\bullet}$ means that there is a minus in front of $(m_2,n_2)$ in the summation condition $(m,n)=(m_1,n_1)-(m_2,n_2)+(m_3,n_3)$; the red lines mean that the summation condition $m=m_1-m_2+m_3$ in the $x$-direction and the summation condition $n=n_1-n_2+n_3$ in the $y$-direction.}
\label{fig1}
\end{figure}

Similar to the analysis of the linear part, there are two ways to view the domain of the multi-linear operator.

The first aspect arises from the order of $u$ appearing in the cubic nonlinearity $|u|^2u=u\bar{u}u$. The upper half of \autoref{fig1} illustrates the variation of the Fourier vector $(m,n)\in\mathbb Z^{\nu_1}\times\mathbb Z^{\nu_2}$ of the nonlinear part $(c_1-c_0)(t,m,n)$ in the first iteration $c_1(t,m,n)$. In this case, we use ${\mathrm D}^{(1,1)}$ to stand for the domain of the multi-linear operator, defined as follows:
\begin{align*}
{\mathrm D}^{(1,1)}=(\mathbb Z^{\nu_1}\times\mathbb Z^{\nu_2})\times(\mathbb Z^{\nu_1}\times\mathbb Z^{\nu_2})\times(\mathbb Z^{\nu_1}\times\mathbb Z^{\nu_2}).
\end{align*}

The second aspect concerns the space directions $x$ and $y$. That is, take the first coordinates $m_j$'s of $(m_j,n_j)$ for $j=1,2,3$, to be the triple $(m_1,m_2,m_3)\in\mathbb Z^{\nu_1}\times\mathbb Z^{\nu_1}\times\mathbb Z^{\nu_1}$; and take the second coordinates $n_j$'s of $(m_j,n_j)$ for $j=1,2,3$, to be the triple $(n_1,n_2,n_3)\in\mathbb Z^{\nu_2}\times\mathbb Z^{\nu_2}\times\mathbb Z^{\nu_2}$. Define
\begin{align*}
\mathscr D^{(1,1)}=\mathscr D_x^{(1,1)}\times \mathscr D_y^{(1,1)},
\end{align*}
where
\begin{align*}
\mathscr D_x^{(1,1)}=\mathbb Z^{\nu_1}\times\mathbb Z^{\nu_1}\times\mathbb Z^{\nu_1};\quad
\mathscr D_y^{(1,1)}=\mathbb Z^{\nu_2}\times\mathbb Z^{\nu_2}\times\mathbb Z^{\nu_2}.
\end{align*}

Clearly, there is a natural isomorphism $\vec{\iota}=(\iota_x,\iota_y)$ between ${\mathrm D}^{(1,1)}$ and ${\mathscr D}^{(1,1)}$, that is,
\begin{align*}
\vec{\iota}: ~{\mathrm D}^{(1,1)}&\rightarrow{\mathscr D}^{(1,1)}\nonumber\\
\underbrace{\big((m_1,n_1),(m_2,n_2),(m_3,n_3)\big)}_{\in(\mathbb Z^{\nu_1}\times\mathbb Z^{\nu_2})^3}=\spadesuit^{(1)}
&\mapsto\widetilde\spadesuit^{(1)}=\vec{\iota}(\spadesuit^{(1)})\\
&\hspace{13.4mm}=\left(\iota_x(\spadesuit^{(1)}),\iota_y(\spadesuit^{(1)})\right)\\
&\hspace{13.4mm}=\left(\widetilde\spadesuit_{x}^{(1)},\widetilde\spadesuit_{y}^{(1)}\right)\\
&\hspace{13.4mm}=\Big((m_1,m_2,m_3),(n_1,n_2,n_3)\Big).
\end{align*}

As to the combination of the summation condition $(m,n)=\sum_{j=1}^{3}(-1)^{j-1}(m_j,n_j)$, define
\begin{align*}
\lambda(\spadesuit^{(1)})&=(\vec{\cc\ca\cs}\circ\vec{\iota})(\spadesuit^{(1)})\\
&=\left((\cc\ca\cs_x\circ\iota_x)(\spadesuit^{(1)}),(\cc\ca\cs_y\circ\iota_y)(\spadesuit^{(1)})\right)\\
&=\left(\cc\ca\cs_x(\widetilde\spadesuit_x^{(1)}),\cc\ca\cs_y(\widetilde\spadesuit_y^{(1)})\right)\\
&=\left(\cc\ca\cs_x(m_1,m_2,m_3),\cc\ca\cs_y(n_1,n_2,n_3)\right)\\
&=(m_1-m_2+m_3,n_1-n_2+n_3)\nonumber\\
&=(m,n).\nonumber
\end{align*}

Furthermore, on the branch $1\in\Gamma^{(1)}$, define
\begin{align*}
\mathfrak C^{(1,1)}(\spadesuit^{(1)})&=\prod_{j=1}^3\{c(m_j,n_j)\}^{\ast^{[j-1]}};\\
\mathfrak I^{(1,1)}(t,\spadesuit^{(1)})&=\int_0^t\Phi^{t-s}(\lambda(\spadesuit^{(1)}))\prod_{j=1}^{3}\left\{\Phi^s(m_j,n_j)\right\}^{\ast^{[j-1]}}{\rm d}s;\\
\mathfrak F^{(1,1)}(\spadesuit^{(1)})&={\rm i}\varepsilon.
\end{align*}

This shows that
\begin{align}\label{c0n}
&(c_1-c_0)(t,m,n)\nonumber\\
=~&\sum_{\gamma^{(1)}=1\in\Gamma^{(1)}}
\sum_{\substack{\lambda(\spadesuit^{(1)})=(m,n)\\\spadesuit^{(1)}\in{\mathrm D}^{(1,\gamma^{(1)})}}}
\mathfrak C^{(1,\gamma^{(1)})}(\spadesuit^{(1)})\mathfrak I^{(1,\gamma^{(1)})}(t,\spadesuit^{(1)})\mathfrak F^{(1,\gamma^{(1)})}(\spadesuit^{(1)}).
\end{align}

Combining \eqref{c0l} with \eqref{c0n} yields
\begin{align*}
c_1(t,m,n)
=~&c_0(t,m,n)+(c_1-c_0)(t,m,n)\nonumber\\
=~&\sum_{\gamma^{(1)}\in\Gamma^{(1)}}\sum_{\substack{\lambda(\spadesuit^{(1)})=(m,n)\\\spadesuit^{(1)}\in{\mathrm D}^{(1,\gamma^{(1)})}}}
\mathfrak C^{(1,\gamma^{(1)})}(\spadesuit^{(1)})\mathfrak I^{(1,\gamma^{(1)})}(t,\spadesuit^{(1)})\mathfrak F^{(1,\gamma^{(1)})}(\spadesuit^{(1)}).\label{c1}
\end{align*}
This concludes the analysis of the first Picard iteration.

\begin{remark}\label{re1}
Before starting the next iteration, we describe the number of components of the elements in  $\mathscr D_x^{(k,0)}(k\geq1), \mathscr D_x^{(1,\gamma^{(1)})}$ according to $\mathbb Z^{\nu_1}$, and those in $\mathscr D_y^{(k,0)}(k\geq1), \mathscr D_y^{(1,\gamma^{(1)})}$ according to $\mathbb Z^{\nu_2}$, where $\gamma^{(1)}\in\Gamma^{(1)}$. It follows from \cite{DLX1} that
\begin{align*}
\mathscr D_x^{(k,0)}&=(\mathbb Z^{\nu_1})^{2\sigma(0)},\quad
\mathscr D_y^{(k,0)}=(\mathbb Z^{\nu_2})^{2\sigma(0)},\quad\text{for}~k\geq1;
\end{align*}
and
\begin{align*}
\mathscr D_x^{(1,\gamma^{(1)})}&=(\mathbb Z^{\nu_1})^{2\sigma(\gamma^{(1)})},\quad
\mathscr D_y^{(1,\gamma^{(1)})}=(\mathbb Z^{\nu_2})^{2\sigma(\gamma^{(1)})},
\end{align*}
where $\sigma$ denotes the first-class counting function, defined as follows:
\begin{align}
\sigma(\gamma^{(k)})=
\begin{cases}
\frac{1}{2},&\gamma^{(k)}=0\in\Gamma^{(0)}, k\geq1;\\
\frac{3}{2},&\gamma^{(1)}=1\in\Gamma^{(1)}.
\end{cases}
\end{align}
\end{remark}

%
%
{\bf Step 4. Analysis of the Second Iteration}~

For the definitions of $\mathrm D^{(k,\gamma^{(k)})}, \lambda, \mathfrak C,\mathfrak I$ and $\mathfrak F$ on the branch $0\in\Gamma^{(k)}$, we refer to {\bf Step 2}.

We now begin analyzing the nonlinear part $(c_2-c_0)(t,m,n)$ of the second Picard iteration. For $k=2$, it follows from \eqref{pi} and \eqref{c1} that
\begin{align*}
&(c_2-c_0)(t,m,n)\\
=~&{\rm i}\varepsilon\int_0^t\Phi^{t-s}(m,n)
\sum_{\substack{(m,n)=\sum_{j=1}^3(-1)^{j-1}(m_j,n_j)\\(m_j,n_j)\in\mathbb Z^{\nu_1}\times\mathbb Z^{\nu_2}\\j=1,2,3}}\prod_{j=1}^{3}\left\{c_{1}(s,m,n)\right\}^{\ast^{[j-1]}}{\rm d}s\\
=~&{\rm i}\varepsilon\int_0^t{\rm d}s\Phi^{t-s}(m,n)\sum_{\substack{(m,n)=\sum_{j=1}^3(-1)^{j-1}(m_j,n_j)\\(m_j,n_j)\in\mathbb Z^{\nu_1}\times\mathbb Z^{\nu_2}\\j=1,2,3}}\\
&
\prod_{j=1}^3\left\{\sum_{\gamma_j^{(1)}\in\Gamma^{(1)}}
\sum_{\substack{\lambda(\spadesuit_j^{(1)})=(m_j,n_j)\\\spadesuit_j^{(1)}\in{\mathrm D}^{(1,\gamma_j^{(1)})}}}
\mathfrak C^{(1,\gamma_j^{(1)})}(\spadesuit^{(1)})\mathfrak I^{(1,\gamma_j^{(1)})}(s,\spadesuit_j^{(1)})\mathfrak F^{(1,\gamma_j^{(1)})}(\spadesuit_j^{(1)})\right\}^{\ast^{[j-1]}}\\
=~&\sum_{\substack{\gamma_j^{(1)}\in\Gamma^{(1)}\\j=1,2,3}}
\sum_{\substack{(m,n)=\sum_{j=1}^3(-1)^{j-1}(m_j,n_j)\\(m_j,n_j)\in\mathbb Z^{\nu_1}\times\mathbb Z^{\nu_2}\\j=1,2,3}}
\sum_{\substack{\lambda(\spadesuit_j^{(1)})=(m_j,n_j)\\\spadesuit_j^{(1)}\in{\mathrm D}^{(1,\gamma_j^{(1)})}\\j=1,2,3}}\prod_{j=1}^3\left\{\mathfrak C^{(1,\gamma_j^{(1)})}(\spadesuit_j^{(1)})\right\}^{\ast^{[j-1]}}\\
&\times\int_0^t\Phi^{t-s}(m,n)\prod_{j=1}^{3}\left\{\mathfrak I^{(1,\gamma_j^{(1)})}(s,\spadesuit_j^{(1)})\right\}^{\ast^{[j-1]}}\times{\rm i}\varepsilon\left\{\mathfrak F^{(1,\gamma_j^{(1)})}(\spadesuit_j^{(1)})\right\}^{\ast^{[j-1]}}.
\end{align*}

Similar to the analysis of the first iteration, we aim to use $\Gamma^{(1)}\times\Gamma^{(1)}\times\Gamma^{(1)}$ to label the nonlinear part $(c_2-c_0)(t,m,n)$ of the second iteration $c_2(t,m,n)$. On this branch, with the aid of \autoref{fig3}, we introduce definitions for the domain of the multi-linear operator, the combination of the summation condition and each term in $c_2(t,m,n)$.

\begin{figure}[htpb!]
\centering
\begin{tikzpicture}[scale=0.7,transform shape]
\node at (5.15,9.2) {$(x,y)$};
\node at (5.88,9.2) {$\mapsto$};
\node at (7,9.2)   {{$\mathbb Z^{\nu_1}\times\mathbb Z^{\nu_2}$}};
\node at (7,8.8)   {\rotatebox{90}{$\in$}};
\node at (7,8.4)   {$(m,n)$};
\node at (7,8)   {\color{green}$\bullet$};
\draw [dashed] (6.5,8)--(7.5,8);
\node at (8.65,8) {$c_2(t,m,n)$};

\node at (6.4,7.3) {$(m,n)$};
\node at (7,7)   {\color{red}$\bullet$};
\draw [dashed] (6.5,7)--(7.5,7);
\node at (9.2,7) {$(c_2-c_0)(t,m,n)$};

\node at (-0.25,6) {$\mathrm D^{(2,\gamma^{(2)})}\ni\spadesuit^{(2)}$};
\node at (1.2,6.02) {\rotatebox{-180}{$\mapsto$}};
\node at (2,6)   {$\bullet$};
\node at (3.8,5.7) {$(m_1,n_1)\in\mathbb Z^{\nu_1}\times\mathbb Z^{\nu_2}$};
\node at (7,6)   {\color{red}$\bullet$};
\node at (8.8,5.7) {$(m_2,n_2)\in\mathbb Z^{\nu_1}\times\mathbb Z^{\nu_2}$};
\node at (12,6)   {$\bullet$};
\node at (13.8,5.7) {$(m_2,n_2)\in\mathbb Z^{\nu_1}\times\mathbb Z^{\nu_2}$};
\draw [dashed] (1.5,6)--(12.5,6);

\node at (2,5)   {$\bullet$};
\node at (3.35,4.7) {$\spadesuit^{(1)}_1\in\mathrm D^{(1,\gamma_1^{(1)})}$};
\node at (7,5)   {\color{red}$\bullet$};
\node at (8.35,4.7) {$\spadesuit^{(1)}_2\in\mathrm D^{(1,\gamma_2^{(1)})}$};
\node at (12,5)   {$\bullet$};
\node at (13.35,4.7) {$\spadesuit^{(1)}_3\in\mathrm D^{(1,\gamma_3^{(1)})}$};
\draw [dashed] (1.5,5)--(12.5,5);

\node at (2,4)   {$\bullet$};
\node at (3.65,3.7)   {$(\mathbb Z^{\nu_1}\times\mathbb Z^{\nu_2})^{2\sigma(\gamma_1^{(1)})}$};
\node at (3.1,3.2)   {\rotatebox{90}{$\in$}};
\node at (4,2.7) {$\left((m_j^1,n_j^1)\right)_{1\leq j\leq 2\sigma(\gamma_1^{(1)})}$};

\node at (7,4)   {\color{red}$\bullet$};
\node at (8.65,3.7)   {$(\mathbb Z^{\nu_1}\times\mathbb Z^{\nu_2})^{2\sigma(\gamma_2^{(1)})}$};
\node at (8.1,3.2)   {\rotatebox{90}{$\in$}};
\node at (9,2.7) {$\left((m_j^2,n_j^2)\right)_{1\leq j\leq 2\sigma(\gamma_2^{(1)})}$};

\node at (12,4)   {$\bullet$};
\node at (13.65,3.7)   {$(\mathbb Z^{\nu_1}\times\mathbb Z^{\nu_2})^{2\sigma(\gamma_3^{(1)})}$};
\node at (13.1,3.2)   {\rotatebox{90}{$\in$}};
\node at (14,2.7) {$\left((m_j^3,n_j^3)\right)_{1\leq j\leq 2\sigma(\gamma_3^{(1)})}$};

\draw [dashed] (1.5,4)--(12.5,4);

\node at (3.25,0)   {$\blacksquare$};
\draw (2.8,2.5)--(3.25,0);
\draw (3.4,2.5)--(8.25,0);
\draw [red](7.8,2.5)--(4.5,0);
\draw [red](8.4,2.5)--(9.5,0);
\draw (12.8,2.5)--(5.75,0);
\draw (13.4,2.5)--(10.75,0);

\node at (4.5,0)   {\color{red}$\blacksquare$};
\node at (5.75,0)  {$\blacksquare$};

\node at (8.25,0)   {$\blacksquare$};
\node at(9.5,0)   {\color{red}$\blacksquare$};
\node at (10.75,0)  {$\blacksquare$};
\draw [dashed] (2.6,0)--(11.5,0);

\draw (7,8)--(7,7);
\draw (7,7)--(2,6);
\draw (7,7)--(7,6);
\draw (7,7)--(12,6);
\draw (2,5)--(2,6);
\draw (2,5)--(2,4);
\draw (7,5)--(7,6);
\draw (7,5)--(7,4);
\draw (12,5)--(12,6);
\draw (12,5)--(12,4);

\node at (3.45,-0.5) {$\widetilde\spadesuit_{1,x}^{(1)}$};
\node at (4.7,-0.5) {$\widetilde\spadesuit_{2,x}^{(1)}$};
\node at (5.95,-0.5) {$\widetilde\spadesuit_{3,x}^{(1)}$};

\draw (3.25,-0.7)--(4.5,-2);
\draw [red](4.5,-0.7)--(4.5,-2);
\draw (5.75,-0.7)--(4.5,-2);

\draw (8.25,-0.7)--(9.5,-2);
\draw [red](9.5,-0.7)--(9.5,-2);
\draw (10.75,-0.7)--(9.5,-2);

\node at (8.45,-0.5) {$\widetilde\spadesuit_{1,y}^{(1)}$};
\node at (9.7,-0.5) {$\widetilde\spadesuit_{2,y}^{(1)}$};
\node at (10.95,-0.5) {$\widetilde\spadesuit_{3,y}^{(1)}$};

\node at (4.5,-2) {$\blacksquare$};
\node at (5.6,-2.5) {$\widetilde\spadesuit_x^{(2)}\in\mathscr D_x^{(2,\gamma^{(2)})}$};
\node at (9.5,-2) {$\blacksquare$};
\node at (10.6,-2.5) {$\widetilde\spadesuit_y^{(2)}\in\mathscr D_y^{(2,\gamma^{(2)})}$};
\draw [dashed](4,-2)--(10,-2);

\node at (7,-4) {$\blacksquare$};
\node at (8.1,-4.4) {$\spadesuit^{(2)}\in\mathscr D^{(2,\gamma^{(2)})}$};

\draw (7,-4)--(4.5,-2.8);
\draw (7,-4)--(9.5,-2.8);

 \coordinate (A) at (0.42,5.7);
 \coordinate (B) at (0.42,-2.6);
 \coordinate (C) at (0.42,-4.5);
 \coordinate (D) at (4.3,-2.6);
 \coordinate (E) at (6.8,-4.5);
 \coordinate (F) at (0.42,-5.5);
 \coordinate (G) at (9.6,-5.5);
 \coordinate (H) at (9.6,-2.8);
\draw [blue](A)--(B);
\draw [->,blue](B)--(D);
\draw [blue](A)--(C);
\draw [->,blue](C)--(E);
\draw [blue](A)--(F);
\draw [blue](F)--(G);
\draw [->,blue](G)--(H);

\node at (2.4,-2.4) {$\iota_x$};
\node at (2.4,-4.3) {$\vec{\iota}$};
\node at (2.4,-5.3) {$\iota_y$};
\end{tikzpicture}
\caption{Analysis of the nonlinear part $(c_2-c_0)(t,m,n)$ of the second iteration $c_2(t,m,n)$. The same elements in \autoref{fig3} are analogous to those in \autoref{fig1}. In addition, the red square ${\color{red}\blacksquare}$ is also assigned with a minus sign.}
\label{fig3}
\end{figure}
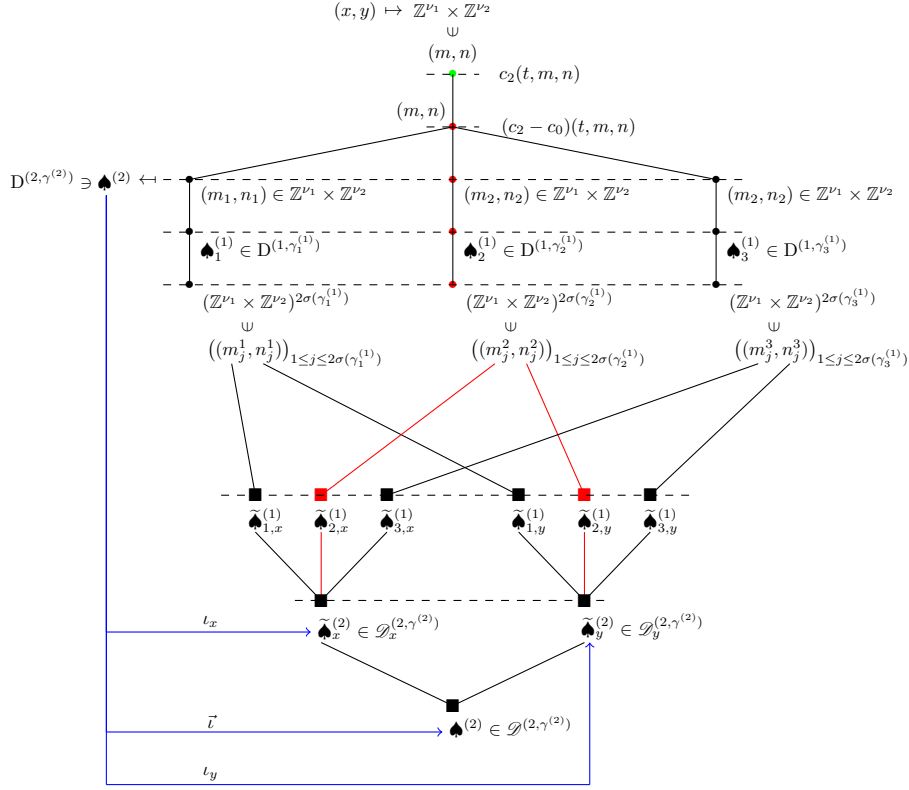

For $\gamma^{(2)}=(\gamma_1^{(1)},\gamma_2^{(2)},\gamma_3^{(1)})\in\Gamma^{(1)}
\times\Gamma^{(1)}\times\Gamma^{(1)}\subset\Gamma^{(2)}$,
define
\begin{align*}
\prod_{j=1}^3{\mathrm D}^{(1,\gamma_j^{(1)})}={\mathrm D}^{(2,\gamma^{(2)})}
\stackrel{\vec{\iota}}{\boldsymbol\simeq}
\mathscr D^{(2,\gamma^{(2)})}=\prod_{j=1}^3\mathscr D_x^{(1,\gamma_j^{(1)})}\times\prod_{j=1}^3\mathscr D_y^{(1,\gamma_j^{(1)})}.
\end{align*}
The key issue is to understand the map $\vec{\iota}$. In this context,
we use $\spadesuit^{(2)}$ to denote an element in $\mathrm D^{(2,\gamma^{(2)})}$, which consists of $(\spadesuit_1^{(1)},\spadesuit_2^{(1)},\spadesuit_3^{(1)})\in\prod_{j=1}^3{\mathrm D}^{(1,\gamma_j^{(1)})}$, where
\begin{align*}
\spadesuit_1^{(1)}&=\big((m_j^1,n_j^1)\big)_{1\leq j\leq 2\sigma(\gamma_1^{(1)})}\in{\mathrm D}^{(1,\gamma_1^{(1)})};\\
\spadesuit_2^{(1)}&=\big((m_j^2,n_j^2)\big)_{1\leq j\leq 2\sigma(\gamma_2^{(1)})}\in{\mathrm D}^{(1,\gamma_2^{(1)})};\\
\spadesuit_3^{(1)}&=\big((m_j^3,n_j^3)\big)_{1\leq j\leq 2\sigma(\gamma_3^{(1)})}\in{\mathrm D}^{(1,\gamma_3^{(1)})}.
\end{align*}
Grouping them according to spatial directions, we have $\widetilde\spadesuit^{(2)}=(\widetilde\spadesuit_x^{(2)},\widetilde\spadesuit_y^{(2)})$, where $\widetilde\spadesuit_x^{(2)}=(\widetilde\spadesuit_{1,x}^{(1)},\widetilde\spadesuit_{2,x}^{(1)},\widetilde\spadesuit_{3,x}^{(1)})$
,
\begin{align*}
\widetilde\spadesuit_{1,x}^{(1)}&=
(m_j^1)_{1\leq j\leq 2\sigma(\gamma_1^{(1)})}\in{\mathscr D}_x^{(1,\gamma_1^{(1)})};\\
\widetilde\spadesuit_{2,x}^{(1)}&=(m_j^2)_{1\leq j\leq 2\sigma(\gamma_2^{(1)})}\in{\mathscr D}_x^{(1,\gamma_2^{(1)})};\\
\widetilde\spadesuit_{3,x}^{(1)}&=(m_j^3)_{1\leq j\leq 2\sigma(\gamma_3^{(1)})}\in{\mathscr D}_x^{(1,\gamma_3^{(1)})},
\end{align*}
and $\widetilde\spadesuit_y^{(2)}=
(\widetilde\spadesuit_{1,y}^{(1)},\widetilde\spadesuit_{2,y}^{(1)},\widetilde\spadesuit_{3,y}^{(1)})$,
\begin{align*}
\widetilde\spadesuit_{1,y}^{(1)}&=
(n_j^1)_{1\leq j\leq 2\sigma(\gamma_1^{(1)})}\in{\mathscr D}_y^{(1,\gamma_1^{(1)})};\\
\widetilde\spadesuit_{2,y}^{(1)}&=(n_j^2)_{1\leq j\leq 2\sigma(\gamma_2^{(1)})}\in{\mathscr D}_y^{(1,\gamma_2^{(1)})};\\
\widetilde\spadesuit_{3,y}^{(1)}&=(n_j^3)_{1\leq j\leq 2\sigma(\gamma_3^{(1)})}\in{\mathscr D}_y^{(1,\gamma_3^{(1)})}.
\end{align*}
That is,
\begin{align*}
\vec{\iota}(\spadesuit^{(2)})&=\left(\iota_x(\spadesuit^{(2)}),\iota_y(\spadesuit^{(2)})\right)\\
&=\left(\widetilde\spadesuit_x^{(2)},\widetilde\spadesuit_y^{(2)}\right)\\
&=\left((\widetilde\spadesuit_{1,x}^{(1)},\widetilde\spadesuit_{2,x}^{(1)},\widetilde\spadesuit_{3,x}^{(1)}),
(\widetilde\spadesuit_{1,y}^{(1)},\widetilde\spadesuit_{2,y}^{(1)},\widetilde\spadesuit_{3,y}^{(1)})\right)\\
&=\Big((m_j^1)_{1\leq j\leq2\sigma(\gamma_1^{(1)})},(m_j^2)_{1\leq j\leq2\sigma(\gamma_2^{(1)})},(m_j^3)_{1\leq j\leq2\sigma(\gamma_3^{(1)})},\\
&\hspace{6.75mm}(n_j^1)_{1\leq j\leq2\sigma(\gamma_1^{(1)})},(n_j^2)_{1\leq j\leq2\sigma(\gamma_2^{(1)})},(n_j^3)_{1\leq j\leq2\sigma(\gamma_3^{(1)})}\Big).
\end{align*}

Define
\begin{align*}
\lambda(\spadesuit^{(2)})
&=(\vec{\cc\ca\cs}\circ\vec{\iota})(\spadesuit^{(2)})\\
&=\left((\cc\ca\cs_x\circ\iota_x)(\spadesuit^{(2)}),
(\cc\ca\cs_y\circ\iota_y)(\spadesuit^{(2)})
\right)\\
&=\left(\cc\ca\cs_x(\widetilde\spadesuit_x^{(2)}),\cc\ca\cs_y(\widetilde\spadesuit_y^{(2)})\right)\\
&=\left(\cc\ca\cs_x(\widetilde\spadesuit_{1,x}^{(1)},
\widetilde\spadesuit_{2,x}^{(1)},
\widetilde\spadesuit_{3,x}^{(1)}),
\cc\ca\cs_y(\widetilde\spadesuit_{1,y}^{(1)},
\widetilde\spadesuit_{2,y}^{(1)},
\widetilde\spadesuit_{3,y}^{(1)})
\right)\\
&=\left(\sum_{j=1}^3(-1)^{j-1}\widetilde\spadesuit_{j,x}^{(1)},
\sum_{j=1}^3(-1)^{j-1}\widetilde\spadesuit_{j,y}^{(1)}\right)\\
&=\sum_{j=1}^{3}(-1)^{j-1}\left(\widetilde\spadesuit_{j,x}^{(1)},\widetilde\spadesuit_{j,y}^{(1)}\right)\\
&=\sum_{j=1}^{3}(-1)^{j-1}\left((\cc\ca\cs_x\circ\iota_x)(\spadesuit_j^{(1)}),
(\cc\ca\cs_y\circ\iota_y)(\spadesuit_j^{(1)})\right)\\
&=\sum_{j=1}^{3}(-1)^{j-1}\left(\vec{\cc\ca\cs}(\spadesuit_j^{(1)})\right)\\
&=\sum_{j=1}^3(-1)^{j-1}\lambda(\spadesuit_j^{(1)}).
\end{align*}

Furthermore, define
\begin{align*}
\mathfrak C^{(2,\gamma^{(2)})}(\spadesuit^{(2)})&=\prod_{j=1}^3\left\{\mathfrak C^{(1,\gamma_j^{(1)})}(\spadesuit_j^{(1)})\right\}^{\ast^{[j-1]}};\\
\mathfrak I^{(2,\gamma^{(2)})}(t,\spadesuit^{(2)})&=\int_0^t\Phi^{t-s}
(\lambda(\spadesuit^{(2)}))\prod_{j=1}^{3}\left\{\mathfrak I^{(1,\gamma_j^{(1)})}(s,\lambda(\spadesuit_j^{(1)}))\right\}^{\ast^{[j-1]}}{\rm d}s;\\
\mathfrak F^{(2,\gamma^{(2)})}(\spadesuit^{(2)})&={\rm i}\varepsilon\prod_{j=1}^3\left\{\mathfrak F^{(1,\gamma_j^{(1)})}(\spadesuit_j^{(1)})\right\}^{\ast^{[j-1]}}.
\end{align*}

Hence, we obtain
\begin{align}\label{c2n}
&(c_2-c_0)(t,m,n)\nonumber\\
=~&\sum_{\gamma^{(2)}\in\Gamma^{(1)}\times\Gamma^{(1)}\times\Gamma^{(1)}}
\sum_{\substack{\lambda(\spadesuit^{(2)})=(m,n)\\\spadesuit^{(2)}\in{\mathrm D}^{(2,\gamma^{(2)})}}}
\mathfrak C^{(2,\gamma^{(2)})}(\spadesuit^{(2)})
\mathfrak I^{(2,\gamma^{(2)})}(t,\spadesuit^{(2)})
\mathfrak F^{(2,\gamma^{(2)})}(\spadesuit^{(2)}).
\end{align}

Putting \eqref{c2n} and \eqref{c0l} yields
\begin{align*}
c_2(t,m,n)&=c_0(t,m,n)+(c_2-c_0)(t,m,n)\nonumber\\
&=\sum_{\gamma^{(2)}\in\Gamma^{(2)}}\sum_{\substack{\lambda(\spadesuit^{(2)})=(m,n)\\\spadesuit^{(1)}\in{\mathrm D}^{(2,\gamma^{(2)})}}}
\mathfrak C^{(2,\gamma^{(2)})}(\spadesuit^{(2)})
\mathfrak I^{(2,\gamma^{(2)})}(t,\spadesuit^{(2)})
\mathfrak F^{(2,\gamma^{(2)})}(\spadesuit^{(2)}).\label{c1}
\end{align*}

This completes the analysis of the second Picard iteration.

\begin{remark}\label{re2}
Similar to the discussion in \autoref{re1}, let $\gamma^{(2)}=(\gamma_1^{(1)},\gamma_2^{(1)},\gamma_3^{(1)})\in\Gamma^{(1)}\times\Gamma^{(1)}\times\Gamma^{(1)}$.
It follows from \cite{DLX1} that
\begin{align*}
\mathscr D_x^{(1,\gamma^{(2)})}=(\mathbb Z^{\nu_1})^{2\sigma(\gamma^{(2)})},\quad
\mathscr D_y^{(1,\gamma^{(2)})}=(\mathbb Z^{\nu_2})^{2\sigma(\gamma^{(2)})},
\end{align*}
where $\sigma$ stands for the first-class counting function defined by letting
\begin{align}
\sigma(\gamma^{(2)})=\sum_{j=1}^3\sigma(\gamma_j^{(1)}).
\end{align}
\end{remark}
{\bf Step 5. Analysis of the General Case.}~

In this step, we shall analyze the $k$-th iteration and prove that, for all $k\geq2$,
\begin{align}\label{ke}
c_k(t,m,n)=\sum_{\gamma^{(k)}\in\Gamma^{(k)}}
\sum_{\substack{\spadesuit^{(k)}\in{\mathrm D}^{(k,\gamma^{(k)})}\\\lambda(\spadesuit^{(k)})=(m,n)}}
\mathfrak C^{(k,\gamma^{(k)})}(\spadesuit^{(k)})
\mathfrak I^{(k,\gamma^{(k)})}(t,\spadesuit^{(k)})
\mathfrak F^{(k,\gamma^{(k)})}(\spadesuit^{(k)}).
\end{align}

Below we give the introduction to the domain ${\mathrm D}^{(k,\gamma^{(k)})}$, the summation condition $\lambda$, the terms $\mathfrak C,\mathfrak I$ and $\mathfrak F$.

For the definitions of $\mathrm D^{(k,\gamma^{(k)})}, \lambda, \mathfrak C,\mathfrak I$ and $\mathfrak F$ on the branch $0\in\Gamma^{(k)}$, we refer to {\bf Step 2}.


With the help of \autoref{fig4}, we introduce them as follows.
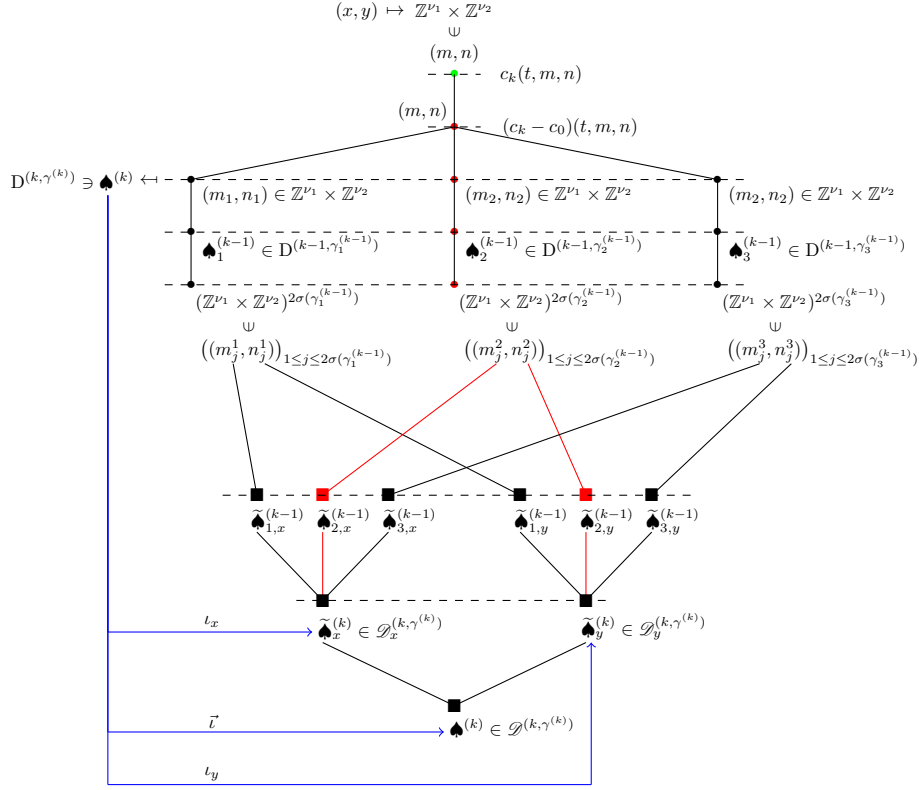
\begin{figure}[htpb!]
\centering
\begin{tikzpicture}[scale=0.7,transform shape]
\node at (5.15,9.2) {$(x,y)$};
\node at (5.88,9.2) {$\mapsto$};
\node at (7,9.2)   {{$\mathbb Z^{\nu_1}\times\mathbb Z^{\nu_2}$}};
\node at (7,8.8)   {\rotatebox{90}{$\in$}};
\node at (7,8.4)   {$(m,n)$};
\node at (7,8)   {\color{green}$\bullet$};
\draw [dashed] (6.5,8)--(7.5,8);
\node at (8.65,8) {$c_k(t,m,n)$};

\node at (6.4,7.3) {$(m,n)$};
\node at (7,7)   {\color{red}$\bullet$};
\draw [dashed] (6.5,7)--(7.5,7);
\node at (9.2,7) {$(c_k-c_0)(t,m,n)$};

\node at (-0.25,6) {$\mathrm D^{(k,\gamma^{(k)})}\ni\spadesuit^{(k)}$};
\node at (1.2,6.02) {\rotatebox{-180}{$\mapsto$}};
\node at (2,6)   {$\bullet$};
\node at (3.8,5.7) {$(m_1,n_1)\in\mathbb Z^{\nu_1}\times\mathbb Z^{\nu_2}$};
\node at (7,6)   {\color{red}$\bullet$};
\node at (8.8,5.7) {$(m_2,n_2)\in\mathbb Z^{\nu_1}\times\mathbb Z^{\nu_2}$};
\node at (12,6)   {$\bullet$};
\node at (13.8,5.7) {$(m_2,n_2)\in\mathbb Z^{\nu_1}\times\mathbb Z^{\nu_2}$};
\draw [dashed] (1.5,6)--(12.5,6);

\node at (2,5)   {$\bullet$};
\node at (3.9,4.7) {$\spadesuit^{(k-1)}_1\in\mathrm D^{(k-1,\gamma_1^{(k-1)})}$};
\node at (7,5)   {\color{red}$\bullet$};
\node at (8.9,4.7) {$\spadesuit^{(k-1)}_2\in\mathrm D^{(k-1,\gamma_2^{(k-1)})}$};
\node at (12,5)   {$\bullet$};
\node at (13.9,4.7) {$\spadesuit^{(k-1)}_3\in\mathrm D^{(k-1,\gamma_3^{(k-1)})}$};
\draw [dashed] (1.5,5)--(12.5,5);

\node at (2,4)   {$\bullet$};
\node at (3.65,3.7)   {$(\mathbb Z^{\nu_1}\times\mathbb Z^{\nu_2})^{2\sigma(\gamma_1^{(k-1)})}$};
\node at (3.1,3.2)   {\rotatebox{90}{$\in$}};
\node at (4,2.7) {$\left((m_j^1,n_j^1)\right)_{1\leq j\leq 2\sigma(\gamma_1^{(k-1)})}$};

\node at (7,4)   {\color{red}$\bullet$};
\node at (8.65,3.7)   {$(\mathbb Z^{\nu_1}\times\mathbb Z^{\nu_2})^{2\sigma(\gamma_2^{(k-1)})}$};
\node at (8.1,3.2)   {\rotatebox{90}{$\in$}};
\node at (9,2.7) {$\left((m_j^2,n_j^2)\right)_{1\leq j\leq 2\sigma(\gamma_2^{(k-1)})}$};

\node at (12,4)   {$\bullet$};
\node at (13.65,3.7)   {$(\mathbb Z^{\nu_1}\times\mathbb Z^{\nu_2})^{2\sigma(\gamma_3^{(k-1)})}$};
\node at (13.1,3.2)   {\rotatebox{90}{$\in$}};
\node at (14,2.7) {$\left((m_j^3,n_j^3)\right)_{1\leq j\leq 2\sigma(\gamma_3^{(k-1)})}$};

\draw [dashed] (1.5,4)--(12.5,4);

\node at (3.25,0)   {$\blacksquare$};
\draw (2.8,2.5)--(3.25,0);
\draw (3.4,2.5)--(8.25,0);
\draw [red](7.8,2.5)--(4.5,0);
\draw [red](8.4,2.5)--(9.5,0);
\draw (12.8,2.5)--(5.75,0);
\draw (13.4,2.5)--(10.75,0);

\node at (4.5,0)   {\color{red}$\blacksquare$};
\node at (5.75,0)  {$\blacksquare$};

\node at (8.25,0)   {$\blacksquare$};
\node at(9.5,0)   {\color{red}$\blacksquare$};
\node at (10.75,0)  {$\blacksquare$};
\draw [dashed] (2.6,0)--(11.5,0);

\draw (7,8)--(7,7);
\draw (7,7)--(2,6);
\draw (7,7)--(7,6);
\draw (7,7)--(12,6);
\draw (2,5)--(2,6);
\draw (2,5)--(2,4);
\draw (7,5)--(7,6);
\draw (7,5)--(7,4);
\draw (12,5)--(12,6);
\draw (12,5)--(12,4);

\node at (3.65,-0.5) {$\widetilde\spadesuit_{1,x}^{(k-1)}$};
\node at (4.9,-0.5) {$\widetilde\spadesuit_{2,x}^{(k-1)}$};
\node at (6.15,-0.5) {$\widetilde\spadesuit_{3,x}^{(k-1)}$};

\draw (3.25,-0.7)--(4.5,-2);
\draw [red](4.5,-0.7)--(4.5,-2);
\draw (5.75,-0.7)--(4.5,-2);

\draw (8.25,-0.7)--(9.5,-2);
\draw [red](9.5,-0.7)--(9.5,-2);
\draw (10.75,-0.7)--(9.5,-2);

\node at (8.65,-0.5) {$\widetilde\spadesuit_{1,y}^{(k-1)}$};
\node at (9.9,-0.5) {$\widetilde\spadesuit_{2,y}^{(k-1)}$};
\node at (11.15,-0.5) {$\widetilde\spadesuit_{3,y}^{(k-1)}$};

\node at (4.5,-2) {$\blacksquare$};
\node at (5.6,-2.5) {$\widetilde\spadesuit_x^{(k)}\in\mathscr D_x^{(k,\gamma^{(k)})}$};
\node at (9.5,-2) {$\blacksquare$};
\node at (10.6,-2.5) {$\widetilde\spadesuit_y^{(k)}\in\mathscr D_y^{(k,\gamma^{(k)})}$};
\draw [dashed](4,-2)--(10,-2);

\node at (7,-4) {$\blacksquare$};
\node at (8.1,-4.4) {$\spadesuit^{(k)}\in\mathscr D^{(k,\gamma^{(k)})}$};

\draw (7,-4)--(4.5,-2.8);
\draw (7,-4)--(9.5,-2.8);

 \coordinate (A) at (0.42,5.7);
 \coordinate (B) at (0.42,-2.6);
 \coordinate (C) at (0.42,-4.5);
 \coordinate (D) at (4.3,-2.6);
 \coordinate (E) at (6.8,-4.5);
 \coordinate (F) at (0.42,-5.5);
 \coordinate (G) at (9.6,-5.5);
 \coordinate (H) at (9.6,-2.8);
\draw [blue](A)--(B);
\draw [->,blue](B)--(D);
\draw [blue](A)--(C);
\draw [->,blue](C)--(E);
\draw [blue](A)--(F);
\draw [blue](F)--(G);
\draw [->,blue](G)--(H);

\node at (2.4,-2.4) {$\iota_x$};
\node at (2.4,-4.3) {$\vec{\iota}$};
\node at (2.4,-5.3) {$\iota_y$};
\end{tikzpicture}
\caption{Analysis of the nonlinear part $(c_k-c_0)(t,m,n)$ of the $k$-th iteration $c_2(t,m,n)$.}
\label{fig4}
\end{figure}

For $\Gamma^{(k)}\ni\gamma^{(k)}=(\gamma_1^{(k-1)},\gamma_2^{(k-1)},\gamma_3^{(k-1)})
\in\Gamma^{(k-1)}\times\Gamma^{(k-1)}\times\Gamma^{(k-1)}$, define
\begin{align*}
\prod_{j=1}^3{\mathrm D}^{(k-1,\gamma_j^{(k-1)})}={\mathrm D}^{(k,\gamma^{(k)})}
\stackrel{\vec{\iota}}{\boldsymbol\simeq}
\mathscr D^{(k,\gamma^{(k)})}=\prod_{j=1}^3\mathscr D_x^{(k-1,\gamma_j^{(k-1)})}\times\prod_{j=1}^3\mathscr D_y^{(k-1,\gamma_j^{(k-1)})},
\end{align*}
where
\begin{align*}
\vec{\iota}(\spadesuit^{(k)})=~&\left(\iota_x(\spadesuit^{(k)}),\iota_y(\spadesuit^{(k)})\right)\\
=~&\left(\widetilde\spadesuit_x^{(k)},\widetilde\spadesuit_y^{(k)}\right)\\
=~&\left((\widetilde\spadesuit_{1,x}^{(k-1)},\widetilde\spadesuit_{2,x}^{(k-1)},\widetilde\spadesuit_{3,x}^{(k-1)}),
(\widetilde\spadesuit_{1,y}^{(k-1)},\widetilde\spadesuit_{2,y}^{(k-1)},\widetilde\spadesuit_{3,y}^{(k-1)})\right)\\
=~&\Big((m_j^{1})_{1\leq j\leq2\sigma(\gamma_1^{(k-1)})},(m_j^2)_{1\leq j\leq2\sigma(\gamma_2^{(k-1)})},(m_j^3)_{1\leq j\leq2\sigma(\gamma_3^{(k-1)})},\\
&\hspace{2.25mm}(n_j^1)_{1\leq j\leq2\sigma(\gamma_1^{(k-1)})},(n_j^2)_{1\leq j\leq2\sigma(\gamma_2^{(k-1)})},(n_j^3)_{1\leq j\leq2\sigma(\gamma_3^{(k-1)})}\Big).
\end{align*}

Define
\begin{align*}
\lambda(\spadesuit^{(k)})
=~&(\vec{\cc\ca\cs}\circ\vec{\iota})(\spadesuit^{(k)})\\
=~&\left((\cc\ca\cs_x\circ\iota_x)(\spadesuit^{(k)}),
(\cc\ca\cs_y\circ\iota_y)(\spadesuit^{(k)})\right)\\
=~&\left(\cc\ca\cs_x(\widetilde\spadesuit_x^{(k)}),\cc\ca\cs_y(\widetilde\spadesuit_y^{(k)})\right)\\
=~&\left(\sum_{j=1}^3(-1)^{j-1}\widetilde\spadesuit_{j,x}^{(k-1)},
\sum_{j=1}^3(-1)^{j-1}\widetilde\spadesuit_{j,y}^{(k-1)}\right)\\
=~&\sum_{j=1}^3(-1)^{j-1}\lambda(\spadesuit_j^{(k-1)}),
\end{align*}
and
\begin{align*}
\mathfrak C^{(k,\gamma^{(k)})}(\spadesuit^{(k)})&=\prod_{j=1}^3\left\{\mathfrak C^{(k-1,\gamma_j^{(k-1)})}(\spadesuit_j^{(k-1)})\right\}^{\ast^{[j-1]}};\\
\mathfrak I^{(k,\gamma^{(k)})}(t,\spadesuit^{(k)})&=
\int_0^t\Phi^{t-s}(\lambda(\spadesuit^{(k)}))\prod_{j=1}^3\left\{\mathfrak I^{(k-1,\gamma_j^{(k-1)})}(s,\spadesuit_j^{(k-1)})\right\}^{\ast^{[j-1]}}{\rm d}s;\\
\mathfrak F^{(k,\gamma^{(k)})}(\spadesuit^{(k)})&=\prod_{j=1}^3\left\{\mathfrak F^{(k-1,\gamma_j^{(k-1)})}(\spadesuit_j^{(k-1)})\right\}^{\ast^{[j-1]}}.
\end{align*}

{\em Proof of \eqref{ke}.}~~~~It follows from {\bf Step 4} that \eqref{ke} holds for $k=2$.

Let $k>2$. Assume that \eqref{ke} is true for all $2<k^\prime<k$.

Consider the nonlinear part $(c_k-c_0)(t,m,n)$ of the $k$-th iteration $c_k(t,m,n)$. By  \eqref{pi} and \eqref{c1}, we have
\begin{align}
&(c_k-c_0)(t,m,n)\nonumber\\
=~&{\rm i}\varepsilon\int_0^t\Phi^{t-s}(m,n)
\sum_{\substack{(m,n)=\sum_{j=1}^3(-1)^{j-1}(m_j,n_j)\\(m_j,n_j)\in\mathbb Z^{\nu_1}\times\mathbb Z^{\nu_2}\\j=1,2,3}}\prod_{j=1}^{3}\left\{c_{k-1}(s,m,n)\right\}^{\ast^{[j-1]}}{\rm d}s\nonumber\\
=~&{\rm i}\varepsilon\int_0^t\Phi^{t-s}(m,n)
\sum_{\substack{(m,n)=\sum_{j=1}^3(-1)^{j-1}(m_j,n_j)\\(m_j,n_j)\in\mathbb Z^{\nu_1}\times\mathbb Z^{\nu_2}\\j=1,2,3}}\prod_{j=1}^{3}\nonumber\\
&\Bigg\{
\sum_{\gamma_j^{(k-1)}\in\Gamma^{(k-1)}}
\sum_{\substack{\spadesuit_j^{(k-1)}\in{\mathrm D}^{(k-1,\gamma_j^{(k-1)})}\\\lambda(\spadesuit_j^{(k-1)})=(m_j,n_j)}}
\mathfrak C^{(k-1,\gamma_j^{(k-1)})}(\spadesuit_j^{(k-1)})\nonumber\\
&\hspace{3mm}\mathfrak I^{(k-1,\gamma_j^{(k-1)})}(s,\spadesuit_j^{(k-1)})
\mathfrak F^{(k-1,\gamma_j^{(k-1)})}(\spadesuit_j^{(k-1)})
\Bigg\}^{\ast^{[j-1]}}{\rm d}s\nonumber\\
=~&\sum_{\substack{
\gamma_j^{(k-1)}\in\Gamma^{(k-1)}\\j=1,2,3}}
\sum_{\substack{(m,n)=\sum_{j=1}^3(-1)^{j-1}(m_j,n_j)\\(m_j,n_j)\in\mathbb Z^{\nu_1}\times\mathbb Z^{\nu_2}\\j=1,2,3}}
\sum_{\substack{(m_j,n_j)=\lambda(\spadesuit_j^{(k-1)})\\(m_j,n_j)\in\mathbb Z^{\nu_1}\times\mathbb Z^{\nu_2}\\j=1,2,3}}\nonumber\\
&\prod_{j=1}^3\left\{\mathfrak C^{(k-1,\gamma_j^{(k-1)})}(\spadesuit_j^{(k-1)})\right\}^{\ast^{[j-1]}}\nonumber\\
&\times\int_0^t\Phi^{t-s}(\lambda(\spadesuit^{(k)}))
\prod_{j=1}^3\left\{\mathfrak I^{(k-1,\gamma_j^{(k-1)})}(s,\spadesuit_j^{(k-1)})\right\}^{\ast^{[j-1]}}
{\rm d}s\nonumber\\
&\times({\rm i}\varepsilon)
\prod_{j=1}^3\left\{\mathfrak F^{(k-1,\gamma_j^{(k-1)})}(\spadesuit_j^{(k-1)})\right\}^{\ast^{[j-1]}}\nonumber\\
=~&\sum_{(\gamma_1^{(k-1)},\gamma_2^{(k-1)},\gamma_3^{(k-1)})\in\Gamma^{(k-1)}\times\Gamma^{(k-1)}
\times\Gamma^{(k-1)}}\nonumber\\
&\sum_{\substack{(m,n)=\sum_{j=1}^3(-1)^{j-1}
\lambda(\spadesuit_j^{(k-1)})\\(\spadesuit_1^{(k-1)},\spadesuit_2^{(k-1)},\spadesuit_3^{(k-1)})\in{\mathrm D}^{(k-1,\gamma_1^{(k-1)})}\times{\mathrm D}^{(k-1,\gamma_2^{(k-1)})}\times{\mathrm D}^{(k-1,\gamma_3^{(k-1)})}\\\lambda(\spadesuit_j^{(k-1)})=(m_j,n_j),~j=1,2,3}}\nonumber\\
~&\prod_{j=1}^3\left\{\mathfrak C^{(k-1,\gamma_j^{(k-1)})}(\spadesuit_j^{(k-1)})\right\}^{\ast^{[j-1]}}
\nonumber\\
&\times\int_0^t\Phi^{t-s}(\lambda(\spadesuit^{(k)}))
\prod_{j=1}^3\left\{\mathfrak I^{(k-1,\gamma_j^{(k-1)})}(s,\spadesuit_j^{(k-1)})\right\}^{\ast^{[j-1]}}
{\rm d}s\nonumber\\
&\times({\rm i}\varepsilon)
\prod_{j=1}^3\left\{\mathfrak F^{(k-1,\gamma_j^{(k-1)})}(\spadesuit_j^{(k-1)})\right\}^{\ast^{[j-1]}}\nonumber\\
=~&\sum_{\gamma^{(k)}\in\Gamma^{(k)}}
\sum_{\substack{\spadesuit^{(k)}\in{\mathrm D}^{(k,\gamma^{(k)})}\\\lambda(\spadesuit^{(k)})=(m,n)}}
\mathfrak C^{(k,\gamma^{(k)})}(\spadesuit^{(k)})
\mathfrak I^{(k,\gamma^{(k)})}(t,\spadesuit^{(k)})
\mathfrak F^{(k,\gamma^{(k)})}(\spadesuit^{(k)})
\label{ken}
\end{align}

Combining \eqref{ken} with \eqref{c0l} yields
\begin{align}
&c_k(t,m,n)\nonumber\\
=~&c_0(t,m,n)+(c_k-c_0)(t,m,n)\nonumber\\
=~&\sum_{\gamma^{(k)}\in\Gamma^{(k)}}\sum_{\substack{\lambda(\spadesuit^{(k)})=(m,n)\\\spadesuit^{(k)}\in{\mathrm D}^{(k,\gamma^{(k)})}}}
\mathfrak C^{(k,\gamma^{(k)})}(\spadesuit^{(k)})\mathfrak I^{(k,\gamma^{(k)})}(t,\spadesuit^{(k)})\mathfrak F^{(1,\gamma^{(k)})}(\spadesuit^{(k)}).\label{c1}
\end{align}

This completes the analysis of the $k$-th Picard iteration.

\begin{remark}\label{re3}
Similar to the discussions in \autoref{re1} and \autoref{re2}, let $$\gamma^{(k)}=(\gamma_1^{(k-1)},\gamma_2^{(k-1)},\gamma_3^{(k-1)})\in\Gamma^{(k-1)}\times\Gamma^{(k-1)}
\times\Gamma^{(k-1)},$$
it follows from \cite{DLX1} that
\begin{align*}
\mathscr D_x^{(k,\gamma^{(k)})}=(\mathbb Z^{\nu_1})^{2\sigma(\gamma^{(k)})},\quad
\mathscr D_y^{(k,\gamma^{(k)})}=(\mathbb Z^{\nu_2})^{2\sigma(\gamma^{(k)})},
\end{align*}
where $\sigma$ stands for the first-class counting function defined by letting
\begin{align}
\sigma(\gamma^{(k)})=\sum_{j=1}^3\sigma(\gamma_j^{(k-1)}).
\end{align}
\end{remark}
By induction, we know that the combinatorial tree \eqref{ctree} holds for every $c_k(t,m,n)$, where $k\geq2$. This completes the proof of \autoref{treethm}.
\end{proof}

\subsection{The Picard Sequence is Uniform-in-Time $\vec{\kappa}/2$-Exponentially Decaying}\label{secdecay}
According to the function of $\sigma$, below we shall use $\left((\mathfrak m_j,\mathfrak n_j)\right)_{1\leq j\leq 2\sigma(\gamma^{(k)})}$ to denote the element $\spadesuit^{(k)}$ in $\mathrm D^{(k,\gamma^{(k)})}$, where $(\mathfrak m_j)_{1\leq j\leq 2\sigma(\gamma^{(k)})}\in\mathscr D_x^{(k,\gamma^{(k)})}$, $(\mathfrak n_j)_{1\leq j\leq 2\sigma(\gamma^{(k)})}\in\mathscr D_y^{(k,\gamma^{(k)})}$ and $k\geq1$. To be more precise (keep \autoref{fig0}, \autoref{fig1}, \autoref{fig3} and \autoref{fig4} in mind),
\begin{itemize}
  \item if $\gamma^{(k)}=0\in\Gamma^{(k)}$, where $k\geq1$, $\mathfrak m_1=m, \mathfrak n_1=n$;
  \item if $\gamma^{(1)}=1\in\Gamma^{(1)}$, $\mathfrak m_j=m_j, \mathfrak n_j=n_j$ for $j\in\{1,2,3\}$;
  \item if $\gamma^{(k)}=(\gamma_1^{(k-1)},\gamma_2^{(k-1)},\gamma_3^{(k-1)})\in\Gamma^{(k-1)}\times\Gamma^{(k-1)}\times\Gamma^{(k-1)}$, where $k\geq2$,
  \begin{align}\label{me}
  \mathfrak m_j=
  \begin{cases}
  m_j^1,&1\leq j\leq 2\sigma(\gamma_1^{(k-1)});\\
  m_{j-2\sigma(\gamma_1^{(k-1)})}^2,&2\sigma(\gamma_1^{(k-1)})+1\leq j\leq 2\sigma(\gamma_1^{(k-1)})+2\sigma(\gamma_2^{(k-1)});\\
  m_{j-(2\sigma(\gamma_1^{(k-1)})+2\sigma(\gamma_2^{(k-1)}))}^3,&2\sigma(\gamma_1^{(k-1)})+2\sigma(\gamma_2^{(k-1)})+1\leq j\leq2\sigma(\gamma^{(k)}),
  \end{cases}
  \end{align}
and
  \begin{align}\label{ne}
  \mathfrak n_j=
  \begin{cases}
  n_j^1,&1\leq j\leq 2\sigma(\gamma_1^{(k-1)});\\
  n_{j-2\sigma(\gamma_1^{(k-1)})}^2,&2\sigma(\gamma_1^{(k-1)})+1\leq j\leq 2\sigma(\gamma_1^{(k-1)})+2\sigma(\gamma_2^{(k-1)});\\
  n_{j-(2\sigma(\gamma_1^{(k-1)})+2\sigma(\gamma_2^{(k-1)}))}^3,&2\sigma(\gamma_1^{(k-1)})+2\sigma(\gamma_2^{(k-1)})+1\leq j\leq2\sigma(\gamma^{(k)}).
  \end{cases}
  \end{align}
\end{itemize}

\begin{lemma}\label{lemmc}
For all $k\geq1$, we have
\begin{align}\label{ce}
\mathfrak C^{(k,\gamma^{(k)})}(\spadesuit^{(k)})=\prod_{j=1}^{2\sigma(\gamma^{(k)})}\left\{c(\mathfrak m_j,\mathfrak n_j)\right\}^{\ast^{[j-1]}}.
\end{align}
Furthermore, under the condition \eqref{ic}, we have
\begin{align}\label{ece}
|\mathfrak C^{(k,\gamma^{(k)})}(\spadesuit^{(k)})|\leq e^{-\left(\kappa_1|\widetilde\spadesuit_x^{(k)}|+\kappa_2|\widetilde\spadesuit_y^{(k)}|\right)},
\end{align}
where
\begin{align*}
|\widetilde\spadesuit_x^{(k)}|=\sum_{j=1}^{2\sigma(\gamma^{(k)})}|\mathfrak m_j|\quad\text{and}\quad
|\widetilde\spadesuit_y^{(k)}|=\sum_{j=1}^{2\sigma(\gamma^{(k)})}|\mathfrak n_j|.
\end{align*}
\end{lemma}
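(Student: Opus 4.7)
\textbf{Proof plan for \autoref{lemmc}.} The plan is to establish the product formula \eqref{ce} by induction on $k$, and then read off the exponential bound \eqref{ece} as an immediate consequence of \eqref{ce}, the decay hypothesis \eqref{ic}, and the fact that $|z^{\ast^{[m]}}|=|z|$ for every $z\in\mathbb{C}$ and $m\in\mathbb{N}$.

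For the base case $k=1$, I would split into the two branches $\gamma^{(1)}\in\Gamma^{(1)}=\{0,1\}$. On the branch $\gamma^{(1)}=0$, \textbf{Step 2} gives $\mathfrak C^{(1,0)}(\spadesuit^{(1)})=c(\lambda(\spadesuit^{(1)}))=c(\mathfrak m_1,\mathfrak n_1)$, which matches the right-hand side of \eqref{ce} since $2\sigma(0)=1$ and only the $j=1$ factor appears (with $\ast^{[0]}$, i.e., no conjugation). On the branch $\gamma^{(1)}=1$, \textbf{Step 3} gives $\mathfrak C^{(1,1)}(\spadesuit^{(1)})=\prod_{j=1}^{3}\{c(m_j,n_j)\}^{\ast^{[j-1]}}$, and since $2\sigma(1)=3$ and $(\mathfrak m_j,\mathfrak n_j)=(m_j,n_j)$ by convention, this is exactly \eqref{ce}.

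For the inductive step, assume \eqref{ce} holds for $k-1$. For $\gamma^{(k)}=(\gamma_1^{(k-1)},\gamma_2^{(k-1)},\gamma_3^{(k-1)})$, the recursive definition in \textbf{Step 5} gives
\begin{align*}
\mathfrak C^{(k,\gamma^{(k)})}(\spadesuit^{(k)})
=\prod_{\ell=1}^{3}\left\{\mathfrak C^{(k-1,\gamma_\ell^{(k-1)})}(\spadesuit_\ell^{(k-1)})\right\}^{\ast^{[\ell-1]}}
=\prod_{\ell=1}^{3}\left\{\prod_{i=1}^{2\sigma(\gamma_\ell^{(k-1)})}\left\{c(\mathfrak m_i^\ell,\mathfrak n_i^\ell)\right\}^{\ast^{[i-1]}}\right\}^{\ast^{[\ell-1]}}.
\end{align*}
Invoking the two algebraic properties of $\ast^{[\cdot]}$ recorded in \cite[Subsection 2.2]{DLX1}, namely distributivity over products and additivity of the exponent modulo $2$, the outer $\ast^{[\ell-1]}$ pushes inside to convert each inner exponent $[i-1]$ into $[i-1+\ell-1]$. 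To match this with the single indexing of \eqref{me}--\eqref{ne}, I need that, after relabeling the $2\sigma(\gamma^{(k)})$ atoms by a global index $j$ via \eqref{me}--\eqref{ne}, the exponent $[j-1]$ agrees mod $2$ with $[i-1+\ell-1]$ on each block. The essential parity check is that $2\sigma(\gamma_\ell^{(k-1)})$ is always odd: this follows from the base values $2\sigma(0)=1$, $2\sigma(1)=3$ together with the recursion $2\sigma(\gamma^{(k)})=\sum_{\ell=1}^{3}2\sigma(\gamma_\ell^{(k-1)})$, i.e., odd$+$odd$+$odd$=$odd. Thus on block $\ell$, the offset between the local index $i$ and the global index $j$ is $\sum_{\ell'<\ell}2\sigma(\gamma_{\ell'}^{(k-1)})$, which has the same parity as $\ell-1$, so $j-1\equiv(i-1)+(\ell-1)\pmod{2}$, yielding \eqref{ce}.

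Once \eqref{ce} is in place, the bound \eqref{ece} is immediate: taking absolute values commutes with products, $|z^{\ast^{[j-1]}}|=|z|$, and the hypothesis \eqref{ic} gives $|c(\mathfrak m_j,\mathfrak n_j)|\leq e^{-(\kappa_1|\mathfrak m_j|+\kappa_2|\mathfrak n_j|)}$, hence
\begin{align*}
|\mathfrak C^{(k,\gamma^{(k)})}(\spadesuit^{(k)})|
\leq\prod_{j=1}^{2\sigma(\gamma^{(k)})}e^{-(\kappa_1|\mathfrak m_j|+\kappa_2|\mathfrak n_j|)}
=e^{-\left(\kappa_1|\widetilde\spadesuit_x^{(k)}|+\kappa_2|\widetilde\spadesuit_y^{(k)}|\right)}.
\end{align*}
The only nontrivial step is the parity bookkeeping in the inductive pass, which is where I would be most careful; once the oddness of $2\sigma(\gamma_\ell^{(k-1)})$ is exploited, the combinatorial identity in \eqref{ce} falls out mechanically and the decay estimate is a one-line consequence.
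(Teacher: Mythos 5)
Your proof is correct and follows the same inductive strategy as the paper. The one genuine difference is that you make explicit a step the paper leaves implicit: the final equality in the inductive step, passing from the nested product $\prod_{\ell}\{\prod_{i}\{c(\mathfrak m_i^\ell,\mathfrak n_i^\ell)\}^{\ast^{[i-1]}}\}^{\ast^{[\ell-1]}}$ to the flat product $\prod_{j}\{c(\mathfrak m_j,\mathfrak n_j)\}^{\ast^{[j-1]}}$, is simply asserted in the paper. You justify it by observing that $2\sigma(\gamma^{(k)})$ is always odd (base cases $2\sigma(0)=1$, $2\sigma(1)=3$, and the recursion preserves oddness since odd$+$odd$+$odd is odd), so the block offset $\sum_{\ell'<\ell}2\sigma(\gamma_{\ell'}^{(k-1)})$ has parity $\ell-1$ and hence $j-1\equiv(i-1)+(\ell-1)\pmod 2$ under the relabeling \eqref{me}--\eqref{ne}. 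This parity bookkeeping is precisely what makes the nested conjugation pattern collapse to the single alternating pattern, and spelling it out strengthens a point the paper treats as self-evident; everything else, including the one-line derivation of \eqref{ece} from \eqref{ce} and \eqref{ic}, coincides with the paper's argument.
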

\begin{proof}
We first use induction to prove \eqref{ce}.

Clearly, it follows from the analysis in \autoref{sectree} that \eqref{ce} holds for $\gamma^{(k)}=0\in\Gamma^{(k)}$, where $k\geq1$, and $\gamma^{(1)}=1\in\Gamma^{(1)}$. This implies that \eqref{ce} is true for $k=1$.

Let $k\geq2$. Assume that \eqref{ce} holds for all $1<k^\prime<k$.

For $\gamma^{(k)}=(\gamma_1^{(k-1)},\gamma_2^{(k-1)},\gamma_3^{(k-1)})
\in\Gamma^{(k-1)}\times\Gamma^{(k-1)}\times\Gamma^{(k-1)}$, it follows from \autoref{sectree} that
\begin{align*}
\mathfrak C^{(k,\gamma^{(k)})}(\spadesuit^{(k)})&=\prod_{j=1}^3\{\mathfrak C^{(k-1,\gamma_j^{(k-1)})}(\spadesuit_j^{(k-1)})\}^{\ast^{[j-1]}}\\
&=\prod_{j=1}^3\left\{\prod_{j^\prime=1}^{2\sigma(\gamma_{j^\prime}^{(k-1)})}\left\{c(m_j^{j^\prime},n_j^{j^\prime})\right\}^{\ast^{[j^\prime-1]}}\right\}^{\ast^{[j-1]}}\\
&=\prod_{j=1}^{2\sigma(\gamma^{(k)})}\left\{c(\mathfrak m_m,\mathfrak n_j)\right\}^{\ast^{[j-1]}}.
\end{align*}
By induction, we know that \eqref{ce} holds for all $k\geq1$.

Hence, under the condition \eqref{ic}, we have
\begin{align*}
|\mathfrak C^{(k,\gamma^{(k)})}(\spadesuit^{(k)})|&\leq \prod_{j=1}^{2\sigma(\gamma^{(k)})}|c(\mathfrak m_j,\mathfrak n_j)|\\
&\leq \prod_{j=1}^{2\sigma(\gamma^{(k)})}e^{-(\kappa_1|\mathfrak m_j|+\kappa_2|\mathfrak n_j|)}\\
&=e^{-\left(\kappa_1\sum_{j=1}^{2\sigma(\gamma^{(k)})}|\mathfrak m_j|+\kappa_2\sum_{j=1}^{2\sigma(\gamma^{(k)})}|\mathfrak n_j|\right)}\\
&=e^{-\left(\kappa_1\sum_{j=1}^3|\widetilde\spadesuit_{j,x}^{(k-1)}|+\kappa_2\sum_{j=1}^3|
\widetilde\spadesuit_{j,y}^{(k-1)}|\right)}\\
&=e^{-\left(\kappa_1|\widetilde\spadesuit_x^{(k)}|+\kappa_2|\widetilde\spadesuit_y^{(k)}|\right)}.
\end{align*}
This shows that \eqref{ece} holds for all $k\geq1$.

This completes the proof of \autoref{lemmc}.
\end{proof}

\begin{lemma}\label{lemmi}
For all $k\geq1$, we have
\begin{align}\label{ie}
|\mathfrak I^{(k,\gamma^{(k)})}(t,\spadesuit^{(k)})|\leq\frac{t^{\ell(\gamma^{(k)})}}{\mathfrak D(\gamma^{(k)})},
\end{align}
where
\begin{align*}
\ell(\gamma^{(k)})&=\sigma(\gamma^{(k)})-\frac{1}{2}\\
&=
\begin{cases}
0,&\gamma^{(k)}=0\in\Gamma^{(k)}, k\geq1;\\
1,&\gamma^{(1)}=1\in\Gamma^{(1)};\\
\sum_{j=1}^3\ell(\gamma_j^{(k-1)})+1,&
\gamma^{(k)}=(\gamma_1^{(k-1)},\gamma_2^{(k-1)},\gamma_3^{(k-1)})\\
&\in\Gamma^{(k-1)\times\Gamma^{(k-1)}\times\Gamma^{(k-1)}}, k\geq2,
\end{cases}
\end{align*}
and
\begin{align*}
\mathfrak D(\gamma^{(k)})=
\begin{cases}
1,&\gamma^{(k)}=0\in\Gamma^{(k)}, k\geq1;\\
1,&\gamma^{(1)}\in\Gamma^{(1)}\\
\ell(\gamma^{(k)})\prod_{j=1}^3\mathfrak D(\gamma_j^{(k-1)}),&\gamma^{(k)}=(\gamma_j^{(k-1)})\in(\Gamma^{(k-1)})^3.
\end{cases}
\end{align*}
\end{lemma}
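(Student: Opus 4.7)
The plan is to prove the estimate by induction on $k$, exploiting two elementary observations: first, the phase $\Phi^{t}(m,n)=e^{-{\rm i}(\langle m,\omega\rangle^{2}+\langle n,\omega'\rangle^{2})t}$ is unimodular, so $|\Phi^{t}(\cdot)|=1$; second, taking complex conjugates preserves absolute value, so $|\{z\}^{\ast^{[j-1]}}|=|z|$ for every complex $z$. These two facts together with the recursive definition of $\mathfrak I^{(k,\gamma^{(k)})}$ reduce the estimate to a pure integration-of-monomials computation.

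For the base case $k=1$, there are two branches. On the linear branch $\gamma^{(1)}=0$, the definition from Step 2 gives $\mathfrak I^{(1,0)}(t,\spadesuit^{(1)})=\Phi^{t}(\lambda(\spadesuit^{(1)}))$, whose modulus equals $1=t^{0}/1=t^{\ell(0)}/\mathfrak D(0)$. On the nonlinear branch $\gamma^{(1)}=1$, the definition from Step 3 gives
\[
\bigl|\mathfrak I^{(1,1)}(t,\spadesuit^{(1)})\bigr|
\leq\int_{0}^{t}\bigl|\Phi^{t-s}(\lambda(\spadesuit^{(1)}))\bigr|\prod_{j=1}^{3}\bigl|\{\Phi^{s}(m_{j},n_{j})\}^{\ast^{[j-1]}}\bigr|\,{\rm d}s=\int_{0}^{t}1\,{\rm d}s=t,
\]
which matches $t^{\ell(1)}/\mathfrak D(1)=t^{1}/1$.

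For the inductive step, suppose the estimate holds for every index strictly less than $k$, and fix $\gamma^{(k)}=(\gamma_{1}^{(k-1)},\gamma_{2}^{(k-1)},\gamma_{3}^{(k-1)})\in(\Gamma^{(k-1)})^{3}$. From the recursive definition of $\mathfrak I^{(k,\gamma^{(k)})}$ in Step 5, together with $|\Phi^{t-s}|=1$ and the preservation of modulus under $\ast^{[\cdot]}$, we estimate
\begin{align*}
\bigl|\mathfrak I^{(k,\gamma^{(k)})}(t,\spadesuit^{(k)})\bigr|
&\leq\int_{0}^{t}\prod_{j=1}^{3}\bigl|\mathfrak I^{(k-1,\gamma_{j}^{(k-1)})}(s,\spadesuit_{j}^{(k-1)})\bigr|\,{\rm d}s\\
&\leq\int_{0}^{t}\prod_{j=1}^{3}\frac{s^{\ell(\gamma_{j}^{(k-1)})}}{\mathfrak D(\gamma_{j}^{(k-1)})}\,{\rm d}s
=\frac{1}{\prod_{j=1}^{3}\mathfrak D(\gamma_{j}^{(k-1)})}\int_{0}^{t}s^{\sum_{j=1}^{3}\ell(\gamma_{j}^{(k-1)})}\,{\rm d}s.
\end{align*}
Evaluating the last integral and using $\sum_{j=1}^{3}\ell(\gamma_{j}^{(k-1)})+1=\ell(\gamma^{(k)})$ yields
\[
\bigl|\mathfrak I^{(k,\gamma^{(k)})}(t,\spadesuit^{(k)})\bigr|
\leq\frac{t^{\ell(\gamma^{(k)})}}{\ell(\gamma^{(k)})\,\prod_{j=1}^{3}\mathfrak D(\gamma_{j}^{(k-1)})}=\frac{t^{\ell(\gamma^{(k)})}}{\mathfrak D(\gamma^{(k)})},
\]
exactly matching the recursion $\mathfrak D(\gamma^{(k)})=\ell(\gamma^{(k)})\prod_{j=1}^{3}\mathfrak D(\gamma_{j}^{(k-1)})$. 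The branch $\gamma^{(k)}=0\in\Gamma^{(k)}$ is handled as in the linear base case, since there $\mathfrak I^{(k,0)}=\Phi^{t}(\lambda(\spadesuit^{(k)}))$ has modulus $1$. There is no genuine obstacle here: the whole content of the lemma is bookkeeping, and the only thing to verify carefully is that the recursions for $\ell$ and $\mathfrak D$ are precisely what comes out of the elementary integral $\int_{0}^{t}s^{L}\,{\rm d}s=t^{L+1}/(L+1)$. Completing the induction on $k$ therefore finishes the proof.
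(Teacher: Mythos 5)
Your proof is correct and takes essentially the same approach as the paper: induction on $k$, using unimodularity of $\Phi^{t}$ and invariance of modulus under $\ast^{[\cdot]}$ to reduce the recursive bound to the elementary integral $\int_{0}^{t}s^{L}\,{\rm d}s=t^{L+1}/(L+1)$, which reproduces the recursions defining $\ell$ and $\mathfrak D$. You spell out the base cases and the branch $\gamma^{(k)}=0$ for $k\geq 2$ slightly more explicitly than the paper, but the argument is the same.
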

\begin{proof}
According to the analysis in \autoref{sectree}, we know that \eqref{ie} holds for $\gamma^{(k)}=0\in\Gamma^{(k)}$, where $k\geq1$, and $\gamma^{(1)}=1\in\Gamma^{(1)}$. This implies that \eqref{ce} is true for $k=1$.

Let $k\geq2$. Assume that \eqref{ie} holds for all $1<k^\prime<k$.

For $\gamma^{(k)}=(\gamma_1^{(k-1)},\gamma_2^{(k-1)},\gamma_3^{(k-1)})
\in\Gamma^{(k-1)}\times\Gamma^{(k-1)}\times\Gamma^{(k-1)}$, it follows from \autoref{gen} that
\begin{align*}
|\mathfrak I^{(k,\gamma^{(k)})}(t,\spadesuit^{(k)})|&\leq
\int_0^t\prod_{j=1}^3|\mathfrak I^{(k-1,\gamma_j^{(k-1)})}(s,\spadesuit_j^{(k-1)})|{\rm d}s\\
&\leq\int_0^t\prod_{j=1}^3\frac{s^{\ell(\gamma_j^{(k-1)})}}{\mathfrak D(\gamma_j^{(k-1)})}{\rm d}s\\
&=\frac{t^{\ell(\gamma^{(k)})}}{\mathfrak D(\gamma^{(k)})}.
\end{align*}
This shows that \eqref{ie} holds for all $k\geq1$.

This completes the proof of \autoref{lemmi}.
\end{proof}
\begin{lemma}\label{lemmf}
For all $k\geq1$, we have
\begin{align}
|\mathfrak F^{(k,\gamma^{(k)})}(\spadesuit^{(k)})|\leq\varepsilon^{\ell(\gamma^{(k)})}.
\end{align}
\end{lemma}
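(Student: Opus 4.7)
The plan is to prove the bound $|\mathfrak F^{(k,\gamma^{(k)})}(\spadesuit^{(k)})|\leq|\varepsilon|^{\ell(\gamma^{(k)})}$ by induction on $k$, mirroring the structure already used for Lemmas \ref{lemmc} and \ref{lemmi}. The recursion for $\mathfrak F$ was set up in Steps 2--5 of \autoref{sectree}, and the recursion for the counting function $\ell$ was spelled out in \autoref{lemmi}, so the argument reduces to combining these two recursions term by term.

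For the base case $k=1$, I would simply read off the definitions from \textbf{Step 2} and \textbf{Step 3}. On the branch $\gamma^{(1)}=0\in\Gamma^{(1)}$ one has $\mathfrak F^{(1,0)}(\spadesuit^{(1)})=1$ and $\ell(0)=0$, so the bound holds as equality. On the branch $\gamma^{(1)}=1\in\Gamma^{(1)}$ one has $\mathfrak F^{(1,1)}(\spadesuit^{(1)})={\rm i}\varepsilon$ and $\ell(1)=1$, so again the inequality holds as equality.

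For the inductive step, suppose the claim has been established for all $1\leq k^\prime<k$. On the branch $\gamma^{(k)}=0\in\Gamma^{(k)}$ with $k\geq 2$ the definition in \textbf{Step 2} gives $\mathfrak F^{(k,0)}(\spadesuit^{(k)})=1$, and since $\ell(0)=0$ the claim is immediate. Otherwise $\gamma^{(k)}=(\gamma_1^{(k-1)},\gamma_2^{(k-1)},\gamma_3^{(k-1)})\in(\Gamma^{(k-1)})^3$, and the recursion from \textbf{Step 5} (consistent with the $k=2$ computation in \textbf{Step 4}) reads
\begin{align*}
\mathfrak F^{(k,\gamma^{(k)})}(\spadesuit^{(k)})={\rm i}\varepsilon\prod_{j=1}^3\left\{\mathfrak F^{(k-1,\gamma_j^{(k-1)})}(\spadesuit_j^{(k-1)})\right\}^{\ast^{[j-1]}}.
\end{align*}
Since the operation $z\mapsto z^{\ast^{[j-1]}}$ is either the identity or complex conjugation, it preserves absolute values. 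Applying the inductive hypothesis to each factor and using the recursion for $\ell$ recorded in \autoref{lemmi}, namely $\ell(\gamma^{(k)})=1+\sum_{j=1}^3\ell(\gamma_j^{(k-1)})$, I obtain
\begin{align*}
|\mathfrak F^{(k,\gamma^{(k)})}(\spadesuit^{(k)})|=|\varepsilon|\prod_{j=1}^3|\mathfrak F^{(k-1,\gamma_j^{(k-1)})}(\spadesuit_j^{(k-1)})|\leq|\varepsilon|\prod_{j=1}^3|\varepsilon|^{\ell(\gamma_j^{(k-1)})}=|\varepsilon|^{\ell(\gamma^{(k)})},
\end{align*}
which closes the induction.

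There is essentially no obstacle here: the argument is purely bookkeeping, and the only subtlety worth emphasizing in the write-up is that $|\{\cdot\}^{\ast^{[j-1]}}|=|\cdot|$, so the star-conjugation does not interfere with the absolute value estimate. All non-trivial combinatorial content has already been absorbed into the definitions of $\mathfrak F$ and $\ell$ and into the assumption $0<|\varepsilon|\ll 1$ recorded in the introduction.
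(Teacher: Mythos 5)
Your proof is correct and proceeds exactly as the paper intends; the paper itself omits the details, merely stating that the lemma ``can be derived by induction,'' and your write-up is the natural way to fill that in. One point worth flagging: the displayed recursion for $\mathfrak F^{(k,\gamma^{(k)})}$ in \textbf{Step 5} of the paper omits the prefactor ${\rm i}\varepsilon$, which is inconsistent both with the $k=2$ formula in \textbf{Step 4} and with the explicit computation leading to \eqref{ken}; you correctly used the version with the ${\rm i}\varepsilon$ factor, without which the claimed bound would fail. You also wrote $|\varepsilon|^{\ell(\gamma^{(k)})}$ rather than the paper's $\varepsilon^{\ell(\gamma^{(k)})}$, which is the safer formulation given that the paper only assumes $0<|\varepsilon|\ll 1$ and allows either sign.
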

\begin{proof}
This can be derived by induction, and the details are omitted for brevity.
\end{proof}

Putting these together yields the following result: uniform-in-time exponential decay with a worse decay rate for the Picard sequence; see \autoref{lemmcd}.
\begin{theorem}\label{lemmcd}
Let $T_\varepsilon=\mathcal O(\varepsilon^{-1})$; see \eqref{te}. For all $k\geq1$, we have
\begin{align}
|c_k(t,m,n)|\leq \mathcal A e^{-\left(\frac{\kappa_1}{2}|m|+\frac{\kappa_2}{2}|n|\right)},\quad\forall(t,m,n)\in[0,T_\epsilon]\times\mathbb Z^{\nu_1}\times\mathbb Z^{\nu_2},
\end{align}
where $\mathcal A$ is defined by \eqref{ae}.
\end{theorem}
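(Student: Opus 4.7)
The strategy is to feed the combinatorial tree formula \eqref{ctree} from \autoref{treethm} into the three pointwise bounds just proven in \autoref{lemmc}, \autoref{lemmi} and \autoref{lemmf}, collapse the gigantic sum over $\Gamma^{(k)}$ via a generating-function/fixed-point argument, and optimize a scalar inequality to read off the constant $T_\varepsilon$. Concretely, combining \eqref{ctree} with \eqref{ece}, \eqref{ie} and \autoref{lemmf} yields
\[
|c_k(t,m,n)|\leq \sum_{\gamma^{(k)}\in\Gamma^{(k)}}\sum_{\substack{\spadesuit^{(k)}\in\mathrm D^{(k,\gamma^{(k)})}\\\lambda(\spadesuit^{(k)})=(m,n)}} e^{-\kappa_1|\widetilde\spadesuit_x^{(k)}|-\kappa_2|\widetilde\spadesuit_y^{(k)}|}\,\frac{(\varepsilon t)^{\ell(\gamma^{(k)})}}{\mathfrak D(\gamma^{(k)})}.
\]

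\textbf{The splitting and factorization step.} I plan to use the trick $e^{-\kappa_j|\widetilde\spadesuit|}=e^{-(\kappa_j/2)|\widetilde\spadesuit|}\cdot e^{-(\kappa_j/2)|\widetilde\spadesuit|}$, dedicating the first copy to \emph{output decay} and the second to \emph{combinatorial summability}. The summation constraint forces $|m|\leq|\widetilde\spadesuit_x^{(k)}|$ and $|n|\leq|\widetilde\spadesuit_y^{(k)}|$, so the first copy is bounded by $e^{-(\kappa_1/2)|m|-(\kappa_2/2)|n|}$, which factors out of the whole sum. The remaining constraint can be dropped (only an upper bound is needed), and since $\mathrm D^{(k,\gamma^{(k)})}\cong(\mathbb Z^{\nu_1})^{2\sigma(\gamma^{(k)})}\times(\mathbb Z^{\nu_2})^{2\sigma(\gamma^{(k)})}$ (\autoref{re3}) the residual sum factors over $2\sigma(\gamma^{(k)})$ independent lattice points:
\[
\sum_{\spadesuit^{(k)}\in\mathrm D^{(k,\gamma^{(k)})}}e^{-\frac{\kappa_1}{2}|\widetilde\spadesuit_x^{(k)}|-\frac{\kappa_2}{2}|\widetilde\spadesuit_y^{(k)}|}=A_1^{2\sigma(\gamma^{(k)})}A_2^{2\sigma(\gamma^{(k)})},\qquad A_j:=\bigl(\coth(\kappa_j/4)\bigr)^{\nu_j}.
\]
The elementary bound $\coth(x)\leq 1/x+x/3$ on $(0,1/4]$ gives the clean estimate $A_j\leq(6/\kappa_j)^{\nu_j}$ for $\kappa_j\in(0,1]$; set $a:=A_1A_2$.

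\textbf{Generating-function / fixed-point collapse.} After the previous step the problem reduces to bounding the scalar quantity $F_k(x):=\sum_{\gamma^{(k)}\in\Gamma^{(k)}}\frac{x^{\ell(\gamma^{(k)})}a^{2\sigma(\gamma^{(k)})}}{\mathfrak D(\gamma^{(k)})}$ at $x=\varepsilon t$. Using the recursive structures of $\Gamma^{(k)}$, $\sigma$, and $\mathfrak D$ given in \autoref{re3}, together with the key identity $\frac{x^{\ell+1}}{\ell+1}=\int_0^x s^\ell\,ds$ (which is exactly how the factor $\ell(\gamma^{(k)})$ in the denominator of $\mathfrak D$ was generated by the time integration in $\mathfrak I$), the function $F_k$ satisfies
\[
F_1(x)=a+a^3 x,\qquad F_k(x)=a+\int_0^x F_{k-1}(s)^3\,ds\quad(k\geq 2).
\]
To deduce a uniform bound $F_k(x)\leq M$ for all $k\geq 1$, it suffices that $a+xM^3\leq M$. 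Maximizing $(M-a)/M^3$ over $M>a$ produces the critical point $M=3a/2$ with maximum value $\frac{4}{27 a^2}$. Therefore, setting $T_\varepsilon$ so that $\varepsilon T_\varepsilon\cdot a^2\leq \tfrac{4}{27}$ (which, after inserting the explicit bound $a\leq(6/\kappa_1)^{\nu_1}(6/\kappa_2)^{\nu_2}$, agrees with the constant displayed in \eqref{te}) gives $F_k(\varepsilon t)\leq \tfrac{3a}{2}=:\mathcal A$ for every $k\geq 1$ and every $t\in[0,T_\varepsilon]$, which is precisely the claim.

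\textbf{Main obstacle.} The technical heart is \emph{Step 4}: verifying that the combinatorial sum $F_k$ genuinely satisfies the Picard-type integral recursion $F_k=a+\int_0^\cdot F_{k-1}^3$. This requires cleanly matching the three-fold product structure of $\Gamma^{(k)}=\{0\}\cup(\Gamma^{(k-1)})^3$ (which supplies the cubic nonlinearity), the additivity $\ell(\gamma^{(k)})=\sum_{j=1}^3\ell(\gamma_j^{(k-1)})+1$ (which supplies the integration), and the multiplicativity $\mathfrak D(\gamma^{(k)})=\ell(\gamma^{(k)})\prod_j\mathfrak D(\gamma_j^{(k-1)})$ with $2\sigma(\gamma^{(k)})=\sum_j 2\sigma(\gamma_j^{(k-1)})$. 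Once this bookkeeping is in place, the ODE/fixed-point argument and the value $\tfrac{4}{27}$ (the maximum of $(M-a)/M^3$) drop out automatically.
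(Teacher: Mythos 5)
Your proof follows essentially the same route as the paper's: feed the tree formula \eqref{ctree} into the three pointwise bounds from \autoref{lemmc}, \autoref{lemmi}, \autoref{lemmf}, split the exponential weight into an output-decay copy and a summability copy, bound the unconstrained lattice sum, and collapse the resulting generating sum over $\Gamma^{(k)}$. The only genuine difference is that you reconstruct in place the two ingredients the paper outsources: the lattice-sum estimate $\sum_{m\in\mathbb Z^{\nu}}e^{-\kappa|m|/2}=(\coth(\kappa/4))^{\nu}\leq(6/\kappa)^{\nu}$ (cited in the paper as \cite[Lemma 9.1]{DLX24JMPA}) and the bound $\sum_{\gamma^{(k)}}y^{\ell(\gamma^{(k)})}/\mathfrak D(\gamma^{(k)})\leq 3/2$ for $y\leq 4/27$ (cited as \cite[Lemma 4.4]{DLX1}), the latter via the integral recursion $F_k=a+\int_0^{\cdot}F_{k-1}^3$, which is a clean and transparent way to see where the numbers $3/2$ and $4/27$ come from.

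One point deserves a flag, though. With $\sigma(\gamma^{(k)})=\ell(\gamma^{(k)})+\tfrac12$ you have $a^{2\sigma}=a\cdot(a^2)^{\ell}$, so your optimization gives the threshold $a^2\varepsilon t\leq\tfrac{4}{27}$, i.e., $T_\varepsilon=\tfrac{4}{27}(\kappa_1/6)^{2\nu_1}(\kappa_2/6)^{2\nu_2}\varepsilon^{-1}$. You assert that this \emph{agrees} with \eqref{te}, but it does not: \eqref{te} has exponents $\nu_1,\nu_2$, not $2\nu_1,2\nu_2$, which corresponds to $a\varepsilon t\leq\tfrac{4}{27}$. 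The source of the mismatch is the paper's own displayed factoring step, which writes $\left((6\kappa_1^{-1})^{\nu_1}(6\kappa_2^{-1})^{\nu_2}\varepsilon t\right)^{\ell}$, i.e.\ $(a\varepsilon t)^{\ell}$, where $a^{2\sigma}=a\cdot(a^2)^{\ell}$ in fact yields $(a^2\varepsilon t)^{\ell}$. Your algebra is the correct one; the paper's \eqref{te} appears to carry a slip (a missing square on $a$), and your $T_\varepsilon$ is the smaller, safe value. Since both are $\mathcal O(\varepsilon^{-1})$ the theorem's asymptotic content is unaffected, but you should not claim equality with \eqref{te} — state your own (smaller) threshold.
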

\begin{proof}
It follows from \autoref{treethm}, \autoref{lemmc},
\autoref{lemmi} and \autoref{lemmf} that
\begin{align*}
&|c_k(t,m,n)|\nonumber\\
\leq~&\sum_{\gamma^{(k)}\in\Gamma^{(k)}}\sum_{\substack{\spadesuit^{(k)}\in\mathrm D^{(k,\gamma^{(k)})}\\\lambda(\spadesuit^{(k)})=(m,n)}}|\mathfrak C^{(k,\gamma^{(k)})}(\spadesuit^{(k)})||\mathfrak I^{(k,\gamma^{(k)})}(t,\spadesuit^{(k)})||\mathfrak F^{(k,\gamma^{(k)})}(\spadesuit^{(k)})|\\
\leq~&\sum_{\gamma^{(k)}\in\Gamma^{(k)}}\frac{(\varepsilon t)^{\ell(\gamma^{(k)})}}{\mathfrak D(\gamma^{(k)})}\sum_{\substack{\spadesuit^{(k)}\in\mathrm D^{(k,\gamma^{(k)})}\\\lambda(\spadesuit^{(k)})=(m,n)}}
e^{-\left(\frac{\kappa_1}{2}|\widetilde\spadesuit_x^{(k)}|+\frac{\kappa_2}{2}|\widetilde\spadesuit_y^{(k)}|\right)}\times
e^{-\left(\frac{\kappa_1}{2}|\widetilde\spadesuit_x^{(k)}|+\frac{\kappa_2}{2}|\widetilde\spadesuit_y^{(k)}|\right)}.
\end{align*}

On the one hand, under the condition $\lambda(\spadesuit^{(k)})=(m,n)$, keeping \autoref{fig4}, \eqref{me} and \eqref{ne} in mind, we have
\begin{align*}
e^{-\left(\frac{\kappa_1}{2}|\widetilde\spadesuit_x^{(k)}|+\frac{\kappa_2}{2}|\widetilde\spadesuit_y^{(k)}|\right)}
&=e^{-\left(\frac{\kappa_1}{2}\sum_{j=1}^{2\sigma(\gamma^{(k)})}|\mathfrak m_j|+\frac{\kappa_2}{2}\sum_{j=1}^{2\sigma(\gamma^{(k)})}|\mathfrak n_j|\right)}\\
&\leq e^{-\left(\frac{\kappa_1}{2}|\sum_{j=1}^{2\sigma(\gamma^{(k)})}(-1)^{j-1}\mathfrak m_j|+\frac{\kappa_2}{2}|\sum_{j=1}^{2\sigma(\gamma^{(k)})}(-1)^{j-1}\mathfrak n_j|\right)}\\
&=e^{-(\frac{\kappa_1}{2}|m|+\frac{\kappa_2}{2}|n|)}.
\end{align*}

On the other hand, by \cite[Lemma 9.1]{DLX24JMPA}, we have
\begin{align*}
&\sum_{\substack{\spadesuit^{(k)}\in\mathrm D^{(k,\gamma^{(k)})}\\\lambda(\spadesuit^{(k)})=(m,n)}}
e^{-\left(\frac{\kappa_1}{2}|\widetilde\spadesuit_x^{(k)}|+\frac{\kappa_2}{2}|\widetilde\spadesuit_y^{(k)}|\right)}
\\
=~&\sum_{\substack{(\mathfrak m_j,\mathfrak n_j)\in\mathbb Z^{\nu_1}\times\mathbb Z^{\nu_2}\\j=1,2,3}}e^{-\left(\frac{\kappa_1}{2}\sum_{j=1}^{2\sigma(\gamma^{(k)})}|\mathfrak m_j|+\frac{\kappa_1}{2}\sum_{j=1}^{2\sigma(\gamma^{(k)})}|\mathfrak n_j|\right)}\\
=~&\sum_{\mathfrak m_j\in\mathbb Z^{\nu_1},j=1,\cdots,2\sigma(\gamma^{(k)})}\prod_{j=1}^{2\sigma(\gamma^{(k)})}e^{-\frac{\kappa_1}{2}|\mathfrak m_j|}
\sum_{\mathfrak n_j\in\mathbb Z^{\nu_1},j=1,\cdots,2\sigma(\gamma^{(k)})}\prod_{j=1}^{2\sigma(\gamma^{(k)})}e^{-\frac{\kappa_2}{2}|\mathfrak n_j|}\\
=~&\prod_{j=1}^{2\sigma^{(k)}}\sum_{\mathfrak m_j\in\mathbb Z^{\nu_1}}e^{-\frac{\kappa_1}{2}|\mathfrak m_j|}\times\prod_{j=1}^{2\sigma^{(k)}}\sum_{\mathfrak n_j\in\mathbb Z^{\nu_2}}e^{-\frac{\kappa_1}{2}|\mathfrak n_j|}\\
\leq~&(6\kappa_1^{-1})^{2\sigma(\gamma^{(k)})\nu_1} (6\kappa_2^{-1})^{2\sigma(\gamma^{(k)})\nu_2}.
\end{align*}

With the relation $\sigma=\ell+\frac{1}{2}$ at hand, we have
\begin{align*}
|c_k(t,m,n)|
\leq (6\kappa_1^{-1})^{\nu_1}(6\kappa_2^{-1})^{\nu_2}e^{-\left(\frac{\kappa_1}{2}|m|+\frac{\kappa_2}{2}|n|\right)}
\sum_{\gamma^{(k)}\in\Gamma^{(k)}}\frac{\left((6\kappa_1^{-1})^{\nu_1}(6\kappa_2^{-1})^{\nu_2}\varepsilon t\right)^{\ell(\gamma^{(k)})}}{\mathfrak D(\gamma^{(k)})}.
\end{align*}

By \cite[Lemma 4.4]{DLX1}, we have
\begin{align*}
\sum_{\gamma^{(k)}\in\Gamma^{(k)}}\frac{\left((6\kappa_1^{-1})^{\nu_1}(6\kappa_2^{-1})^{\nu_2}\varepsilon t\right)^{\ell(\gamma^{(k)})}}{\mathfrak D(\gamma^{(k)})}\leq\frac{3}{2}
\end{align*}
provided that
\begin{align}\label{te}
0<t\leq\frac{4}{27}\left(\frac{\kappa_1}{6}\right)^{\nu_1}\left(\frac{\kappa_2}{6}\right)^{\nu_2}\varepsilon^{-1}:=T_{\varepsilon}.
\end{align}
That is,
\begin{align}\label{ck2}
|c_k(t,m,n)|\leq \mathcal Ae^{-\left(\frac{\kappa_1}{2}|m|+\frac{\kappa_2}{2}|n|\right)},\quad\forall (t,m,n)\in[0,T_\varepsilon]\times\mathbb Z^{\nu_1}\times\mathbb Z^{\nu_2},
\end{align}
where
\begin{align}\label{ae}
\mathcal A=\frac{3}{2}(6\kappa_1^{-1})^{\nu_1}(6\kappa_2^{-1})^{\nu_2}.
\end{align}
This completes the proof of \autoref{lemmcd}.
\end{proof}
\section{The Picard Sequence is a Cauchy Sequence}\label{cau}
\begin{theorem}\label{lems}
For all $k\geq1$, we have
\begin{align}\label{cke}
|(c_k-c_{k-1})(t,m,n)|\leq3^{k-1}\mathcal A^{2k+1}\varepsilon^k\frac{t^k}{k!}\sum_{\substack{(m,n)=\sum_{j=1}^{2k+1}(-1)^{j-1}(a_j,b_j)
\\(a_j,b_j)\in\mathbb Z^{\nu_1}\times\mathbb Z^{\nu_2}\\j=1,\cdots,2k+1}}e^{-\left(\frac{\kappa_1}{2}|a_j|+\frac{\kappa_2}{2}|b_j|\right)}.
\end{align}
Furthermore we have
\begin{align}\label{kds}
|(c_k-c_{k-1})(t,m,n)|\leq3^{-1}\mathcal A(6\kappa_1^{-1})^{\nu_1}(6\kappa_2^{-1})^{\nu_2}\frac{\left(3\mathcal A^2(6\kappa_1^{-1})^2(6\kappa_2^{-1})^2\varepsilon t\right)^k}{k!},\quad \forall k\geq1.
\end{align}
This implies that the Picard sequence $\{c_k(t,m,n)\}$ is a Cauchy sequence on $(t,m,n)\in[0,T_\varepsilon]\times\mathbb Z^{\nu_1}\times\mathbb Z^{\nu_2}$.
 \end{theorem}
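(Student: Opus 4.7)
The plan is to establish \eqref{cke} by induction on $k\geq 1$, derive \eqref{kds} from it via an unconstrained lattice-sum bound, and conclude the Cauchy property from the factorial decay in \eqref{kds} through the Weierstrass M-test. The key algebraic device in the induction is the telescoping identity for a triple product, $A_1\bar A_2A_3-B_1\bar B_2B_3=(A_1-B_1)\bar A_2A_3+B_1(\overline{A_2-B_2})A_3+B_1\bar B_2(A_3-B_3)$, combined with the uniform-in-time pointwise decay from \autoref{lemmcd}.

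The base case $k=1$ is straightforward: from \eqref{pi}, $(c_1-c_0)(t,m,n)={\rm i}\varepsilon\int_0^t\Phi^{t-s}(m,n)\sum\prod_{j=1}^3\{c_0(s,m_j,n_j)\}^{\ast^{[j-1]}}\,{\rm d}s$, so taking absolute values (using $|\Phi|=1$ and $|c(m_j,n_j)|\leq e^{-(\kappa_1|m_j|+\kappa_2|n_j|)/2}$) and evaluating the $s$-integral yield exactly \eqref{cke} at $k=1$, with $\mathcal A\geq 1$ absorbing the three trivial unit factors. For the inductive step ($k\geq 2$), I apply the telescoping identity to $\prod_{j=1}^3\{c_{k-1}\}^{\ast^{[j-1]}}-\prod_{j=1}^3\{c_{k-2}\}^{\ast^{[j-1]}}$. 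Each of the three resulting summands contains one differenced factor $(c_{k-1}-c_{k-2})(s,m_{j_0},n_{j_0})$, which the induction hypothesis controls by $3^{k-2}\mathcal A^{2k-1}\varepsilon^{k-1}s^{k-1}/(k-1)!$ times a sum over $2(k-1)+1=2k-1$ auxiliary lattice vectors summing to $(m_{j_0},n_{j_0})$, together with two plain factors, each bounded by $\mathcal Ae^{-(\kappa_1|m_j|+\kappa_2|n_j|)/2}$ via \autoref{lemmcd}. Substituting the inner constraint into the outer constraint $(m,n)=\sum_{j=1}^3(-1)^{j-1}(m_j,n_j)$ produces a single summation over $2k+1$ lattice vectors whose signed sum equals $(m,n)$; although the inherited sign pattern depends on $j_0\in\{1,2,3\}$, a permutation of the summation indices brings it into the alternating form of \eqref{cke} without changing the exponential product, which is symmetric in those indices. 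Summing the three contributions gives a prefactor $3\cdot 3^{k-2}\mathcal A^{2k-1}\cdot\mathcal A^2=3^{k-1}\mathcal A^{2k+1}$, and the time integral $\int_0^t s^{k-1}/(k-1)!\,{\rm d}s=t^k/k!$ absorbs the $\varepsilon$ from the Picard iteration to complete the induction.

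To deduce \eqref{kds}, I bound the sum in \eqref{cke} by dropping the constraint on $(a_j,b_j)$ and factorizing:
\begin{align*}
\sum_{\substack{(a_j,b_j)\in\mathbb Z^{\nu_1}\times\mathbb Z^{\nu_2}\\ 1\leq j\leq 2k+1}}\prod_{j=1}^{2k+1}e^{-(\kappa_1|a_j|+\kappa_2|b_j|)/2}=\Bigl(\sum_{a\in\mathbb Z^{\nu_1}}e^{-\kappa_1|a|/2}\Bigr)^{2k+1}\Bigl(\sum_{b\in\mathbb Z^{\nu_2}}e^{-\kappa_2|b|/2}\Bigr)^{2k+1},
\end{align*}
and use the lattice bound $\sum_{a\in\mathbb Z^{\nu_j}}e^{-\kappa_j|a|/2}\leq(6\kappa_j^{-1})^{\nu_j}$ already employed in the proof of \autoref{lemmcd}. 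Collecting factors and pulling one copy of $\mathcal A(6\kappa_1^{-1})^{\nu_1}(6\kappa_2^{-1})^{\nu_2}/3$ outside produces exactly \eqref{kds}. Setting $C:=\mathcal A(6\kappa_1^{-1})^{\nu_1}(6\kappa_2^{-1})^{\nu_2}/3$ and $X:=3\mathcal A^2(6\kappa_1^{-1})^{2\nu_1}(6\kappa_2^{-1})^{2\nu_2}$, \eqref{kds} reads $|c_k-c_{k-1}|\leq C(X\varepsilon t)^k/k!$; since $\sum_{k\geq 0}(X\varepsilon T_\varepsilon)^k/k!=e^{X\varepsilon T_\varepsilon}<\infty$, the Weierstrass M-test makes $c_0+\sum_k(c_k-c_{k-1})$ converge uniformly on $[0,T_\varepsilon]\times\mathbb Z^{\nu_1}\times\mathbb Z^{\nu_2}$, which is equivalent to the Cauchy property of $\{c_k(t,m,n)\}$ on that set.

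The main obstacle I anticipate is the bookkeeping in the inductive step: after telescoping and substituting the inner constraint, the signed sum of the combined $2k+1$ lattice vectors is not automatically in the canonical alternating form $(m,n)=\sum_{j=1}^{2k+1}(-1)^{j-1}(a_j,b_j)$ appearing in \eqref{cke}, and one must verify that a reindexing repairs this without affecting the weight, which it does, since the $\ell^1$-weighted exponential $\prod_{j}e^{-(\kappa_1|a_j|+\kappa_2|b_j|)/2}$ is symmetric under permutation of the summation labels. Everything else is either a direct application of \autoref{lemmcd} or an elementary lattice-sum and integration computation.
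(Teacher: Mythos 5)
Your proof is correct and follows the paper's own route: telescope the cubic difference into three terms each containing a single differenced factor, bound each term by combining the induction hypothesis at level $k-1$ with the uniform-in-time estimate of \autoref{lemmcd}, collapse the nested summation constraints into a single signed sum over $2k+1$ lattice vectors after a sign-preserving relabeling, and then pass from the constrained sum to an unconstrained factorized lattice sum to obtain \eqref{kds} and conclude the Cauchy property via the factorial tail. Incidentally, your constant-tracking is tighter than the printed argument, which uses $3^{k-1}$ for the induction-hypothesis factor inside $\psi_1$ (it should be $3^{k-2}$) and therefore ends with $3^{k}$, whereas your $3\cdot 3^{k-2}=3^{k-1}$ matches \eqref{cke} exactly, and your exponents $(6\kappa_j^{-1})^{2\nu_j}$ in $X$ are the correct ones where \eqref{kds} displays $(6\kappa_j^{-1})^{2}$.
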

\begin{proof}
For $k=1$, it follows from \eqref{pi} that
\begin{align*}
|(c_1-c_0)(t,m,n)|&\leq\varepsilon\int_0^t
\sum_{
\substack{
(m,n)=\sum_{j=1}^3(-1)^{j-1}(m_j,n_j)\\(m_j,n_j)\in\mathbb Z^{\nu_1}\times\mathbb Z^{\nu_2}\\j=1,2,3
}
}
\prod_{j=1}^3|c_0(s,m_j,n_j)|{\rm d}s\\
&\leq \mathcal A^3\varepsilon t\sum_{
\substack{
(m,n)=\sum_{j=1}^3(-1)^{j-1}(a_j,b_j)\\(a_j,b_j)\in\mathbb Z^{\nu_1}\times\mathbb Z^{\nu_2}\\j=1,2,3
}
}e^{-\left(\frac{\kappa_1}{2}|a_j|+\frac{\kappa_2}{2}|b_j|\right)}.
\end{align*}
This shows that \eqref{cke} holds for $k=1$.

Let $k\geq2$. Assume that \eqref{cke} is true for all $1<k^\prime<k$. For $k$, by \eqref{pi}, we have
\begin{align*}
&|(c_k-c_{k-1})(t,m,n)|\\
\leq~&
\varepsilon\int_0^t\sum_{
\substack{
(m,n)=\sum_{j=1}^3(-1)^{j-1}(m_j,n_j)\\(m_j,n_j)\in\mathbb Z^{\nu_1}\times\mathbb Z^{\nu_2}\\j=1,2,3
}
}
\\
&|\prod_{j=1}^3\left\{c_{k-1}(s,m_j,n_j)\right\}^{\ast^{[j-1]}}
-\prod_{j=1}^3\left\{c_{k-2}(s,m_j,n_j)\right\}^{\ast^{[j-1]}}|{\rm d}s\\
\leq~&\psi_1+\psi_2+\psi_3,
\end{align*}
where
\begin{align*}
\psi_1=~&\varepsilon\int_0^t\sum_{
\substack{
(m,n)=\sum_{j=1}^3(-1)^{j-1}(m_j,n_j)\\(m_j,n_j)\in\mathbb Z^{\nu_1}\times\mathbb Z^{\nu_2}\\j=1,2,3
}
}\\
~&|(c_{k-1}-c_{k-2})(s,m_1,n_1)||c_{k-1}(s,m_2,n_2)||c_{k-1}(s,m_3,n_3)|{\rm ds};\\
\psi_2=~&\varepsilon\int_0^t\sum_{
\substack{
(m,n)=\sum_{j=1}^3(-1)^{j-1}(m_j,n_j)\\(m_j,n_j)\in\mathbb Z^{\nu_1}\times\mathbb Z^{\nu_2}\\j=1,2,3
}
}\\
&|c_{k-2}(s,m_1,n_1)||(c_{k-1}-c_{k-2})(s,m_2,n_2)||c_{k-1}(s,m_3,n_3)|{\rm ds};\\
\psi_3=~&\varepsilon\int_0^t\sum_{
\substack{
(m,n)=\sum_{j=1}^3(-1)^{j-1}(m_j,n_j)\\(m_j,n_j)\in\mathbb Z^{\nu_1}\times\mathbb Z^{\nu_2}\\j=1,2,3
}
}\\
&|c_{k-1}(s,m_1,n_1)||c_{k-1}(s,m_2,n_2)||(c_{k-1}-c_{k-2})(s,m_3,n_3)|{\rm ds}.
\end{align*}

For $\psi_1$, according to the induction assumption, \eqref{ck2} and the following \autoref{fig5},
\begin{figure}[H]
\centering
\begin{tikzpicture}[scale=0.8,transform shape]
\node at (0,0.45) {$(m,n)$};
\node at (0,0) {$\bullet$};
\node at (-4.75,-0.7) {$(m_1,n_1)$};
\node at (-4,-1) {$\bullet$};
\node at (-0.75,-0.7) {$(m_2,n_2)$};
\node at (0,-1) {\color{red}$\bullet$};
\node at (2,-1) {$\bullet$};
\node at (2.75,-0.7) {$(m_3,n_3)$};
\node at (-6,-2) {$\bullet$};
\node at (-6,-2.4) {$(a_1,b_1)$};
\node at (-4,-2) {\color{gray}$\bullet$};
\node at (-4,-2.4) {$(a_j,b_j)$};
\node at (-2,-2) {$\bullet$};
\node at (-2,-2.4) {$(a_{2k-1},b_{2k-1})$};
\node at (0,-2) {\color{red}$\bullet$};
\node at (0,-2.4) {$(a_{2k},b_{2k})$};
\node at (2,-2) {$\bullet$};
\node at (2,-2.4) {$(a_{2k+1},b_{2k+1})$};
\draw (0,0)--(-4,-1);
\draw (0,0)--(0,-1);
\draw (0,0)--(2,-1);
\draw (-4,-1)--(-6,-2);
\draw (-4,-1)--(-4,-2);
\draw (-4,-1)--(-2,-2);
\draw (0,-1)--(0,-2);
\draw (2,-1)--(2,-2);

\draw [dashed] (-4.5,-1)--(2.5,-1);
\draw [dashed] (-6.5,-2)--(2.5,-2);

\end{tikzpicture}
\caption{Analysis for $\psi_1$. The black circle point $\bullet$ and the red circle point ${\color{red}\bullet}$ are assigned with the positive and negative signs respectively. The gray circle point ${\color{gray}\bullet}$ takes $\bullet$ or ${\color{red}\bullet}$ which depends on the position.}
\label{fig5}
\end{figure}
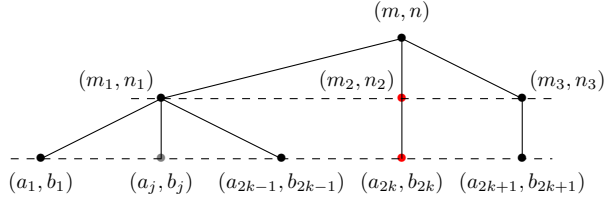
we have
\begin{align*}
\psi_1\leq~
&\varepsilon\int_0^t\sum_{
\substack{
(m,n)=\sum_{j=1}^3(-1)^{j-1}(m_j,n_j)\\(m_j,n_j)\in\mathbb Z^{\nu_1}\times\mathbb Z^{\nu_2}\\j=1,2,3
}
}3^{k-1}\mathcal A^{2k-1}\varepsilon^{k-1}\frac{s^{k-1}}{(k-1)!}\\
&
\sum_{\substack{(m_1,n_1)=\sum_{j=1}^{2k-1}(-1)^{j-1}(a_j,b_j)\\(a_j,b_j)\in\mathbb Z^{\nu_1}\times\mathbb Z^{\nu_2}\\j=1,\cdots,2k-1}}\prod_{j=1}^{2k-1}e^{-\left(\frac{\kappa_1}{2}|a_j|+\frac{\kappa_2}{2}|b_j|\right)}
\\
&\mathcal Ae^{-\left(\frac{\kappa_1}{2}|m_2|+\frac{\kappa_1}{2}|n_2|\right)}\cdot\mathcal Ae^{-\left(\frac{\kappa_1}{2}|m_3|+\frac{\kappa_2}{2}|n_3|\right)}\\
\leq~&\mathcal A^{2k+1}\varepsilon^k\frac{t^k}{k!}
\sum_{
\substack{
(m,n)=\sum_{j=1}^3(-1)^{j-1}(m_j,n_j)\\(m_j,n_j)\in\mathbb Z^{\nu_1}\times\mathbb Z^{\nu_2}\\j=1,2,3
}
}
\sum_{\substack{(m_1,n_1)=\sum_{j=1}^{2k-1}(-1)^{j-1}(a_j,b_j)\\(a_j,b_j)\in\mathbb Z^{\nu_1}\times\mathbb Z^{\nu_2}\\j=1,\cdots,2k-1}}\\
&3^{k-1}\prod_{j=1}^{2k-1}e^{-\left(\frac{\kappa_1}{2}|a_j|+\frac{\kappa_2}{2}|b_j|\right)}\cdot\mathcal Ae^{-\left(\frac{\kappa_1}{2}|m_2|+\frac{\kappa_1}{2}|n_2|\right)}\cdot\mathcal Ae^{-\left(\frac{\kappa_1}{2}|m_3|+\frac{\kappa_2}{2}|n_3|\right)}\\
=~&3^{k-1}\mathcal A^{2k+1}\varepsilon^k\frac{t^k}{k!}\sum_{\substack{(m,n)=\sum_{j=1}^{2k+1}(-1)^{j-1}(a_j,b_j)\\(a_j,b_j)\in\mathbb Z^{\nu_1}\times\mathbb Z^{\nu_2}\\j=1,\cdots,2k+1}}\prod_{j=1}^{2k+1}e^{-\left(\frac{\kappa_1}{2}|a_j|+\frac{\kappa_2}{2}|b_j|\right)}.
\end{align*}
Similarly, with the following \autoref{fig87} and \autoref{fig7},
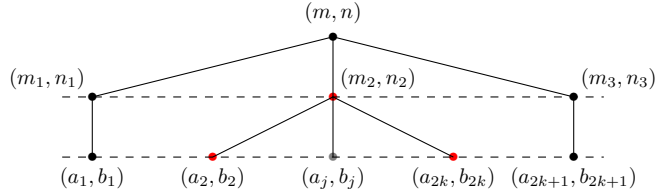
\begin{figure}[H]
\centering
\begin{tikzpicture}[scale=0.8,transform shape]
\node at (0,2.4) {$(m,n)$};
\node at (0,2){$\bullet$};
\node at (4.75,1.3) {$(m_3,n_3)$};
\node at (4,1){$\bullet$};
\node at (0,1) {\color{red}$\bullet$};
\node at (0.75,1.3) {$(m_2,n_2)$};
\node at (-4.75,1.3) {$(m_1,n_1)$};
\node at (-4,1) {$\bullet$};
\node at (-4,0) {$\bullet$};
\node at (-4,-0.35) {$(a_1,b_1)$};
\node at (-2,-0.35) {$(a_2,b_2)$};
\node at (0,-0.35) {$(a_j,b_j)$};
\node at (2,-0.35) {$(a_{2k},b_{2k})$};
\node at (4,-0.35) {$(a_{2k+1},b_{2k+1})$};
\node at (-2,0) {\color{red}$\bullet$};
\node at (0,0) {\color{gray}$\bullet$};
\node at (2,0) {\color{red}$\bullet$};
\node at (4,0) {$\bullet$};
\draw [dashed] (-4.5,1)--(4.5,1);
\draw [dashed] (-4.5,0)--(4.5,0);
\draw (0,2)--(-4,1);
\draw (0,2)--(0,1);
\draw (0,2)--(4,1);
\draw (-4,1)--(-4,0);
\draw (0,1)--(-2,0);
\draw (0,1)--(0,0);
\draw (0,1)--(2,0);
\draw (4,1)--(4,0);
\end{tikzpicture}
\caption{Analysis for $\psi_2$.}
\label{fig87}
\end{figure}
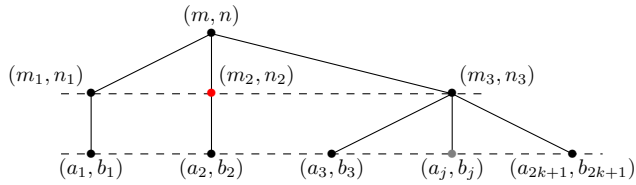
\begin{figure}[H]
\centering
\begin{tikzpicture}[scale=0.8,transform shape]
\draw (8,0)--(6,1);
\draw (6,0)--(6,1);
\draw (4,0)--(6,1);
\draw (2,0)--(2,1);

\draw (0,0)--(0,1);
\draw (2,2)--(6,1);
\draw (2,2)--(2,1);
\draw (2,2)--(0,1);
\draw [dashed](-0.5,1)--(6.5,1);
\draw [dashed](-0.5,0)--(8.5,0);
\node at (2,2.3) {$(m,n)$};
\node at (2,2) {$\bullet$};
\node at (6.75,1.3) {$(m_3,n_3)$};
\node at (6,1) {$\bullet$};
\node at (2.75,1.3) {$(m_2,n_2)$};
\node at (2,1) {\color{red}$\bullet$};
\node at (-0.75,1.3) {$(m_1,n_1)$};
\node at (0,1) {$\bullet$};
\node at (8,0) {$\bullet$};
\node at (6,0) {\color{gray}$\bullet$};
\node at (4,0) {$\bullet$};
\node at (2,0) {$\bullet$};
\node at (0,0) {$\bullet$};
\node at (0,-0.25) {$(a_1,b_1)$};
\node at (2,-0.25) {$(a_2,b_2)$};
\node at (4,-0.25) {$(a_3,b_3)$};
\node at (6,-0.25) {$(a_j,b_j)$};
\node at (8,-0.25) {$(a_{2k+1},b_{2k+1})$};
\end{tikzpicture}
\caption{Analysis for $\psi_3$.}
\label{fig7}
\end{figure}
\noindent we can prove that
\begin{align*}
\psi_j\leq&3^{k-1}\mathcal A^{2k+1}\varepsilon^k\frac{t^k}{k!}\sum_{\substack{(m,n)=\sum_{j=1}^{2k+1}(-1)^{j-1}(a_j,b_j)\\(a_j,b_j)\in\mathbb Z^{\nu_1}\times\mathbb Z^{\nu_2}\\j=1,\cdots,2k+1}}\prod_{j=1}^{2k+1}e^{-\left(\frac{\kappa_1}{2}|a_j|+\frac{\kappa_2}{2}|b_j|\right)},\quad j=1,2,3.
\end{align*}
Hence
\begin{align*}
|(c_k-c_{k-1})(t,m,n)|\leq3^{k}\mathcal A^{2k+1}\varepsilon^k\frac{t^k}{k!}\sum_{\substack{(m,n)=\sum_{j=1}^{2k+1}(-1)^{j-1}(a_j,b_j)\\(a_j,b_j)\in\mathbb Z^{\nu_1}\times\mathbb Z^{\nu_2}\\j=1,\cdots,2k+1}}\prod_{j=1}^{2k+1}e^{-\left(\frac{\kappa_1}{2}|a_j|+\frac{\kappa_2}{2}|b_j|\right)}.
\end{align*}
This shows that \eqref{cke} holds for $k$.

By induction, we know that \eqref{cke} is true for all $k\geq1$.

It follows from \eqref{cke} and \cite[Lemma 9.1]{DLX24JMPA} that \eqref{kds} is true.  Hence the Picard sequence $\{c_k(t,m,n)\}$ is a Cauchy sequence on $(t,m,n)\in[0,T_\varepsilon]\times\mathbb Z^{\nu_1}\times\mathbb Z^{\nu_2}$.

This completes the proof of \autoref{lems}.
\end{proof}

\section{Asymptotic Dynamics}\label{secasy}
Set
\begin{align}
\mathfrak c(t,m,n)&=\lim_{k\rightarrow}c_k(t,m,n), \quad (t,m,n)\in[0,T_{\varepsilon}]\times\mathbb Z^{\nu_1}\times\mathbb Z^{\nu_2};\\
\mathfrak u(t,x,y)&=\sum_{(m,n)\in\mathbb Z^{\nu_1}\times\mathbb Z^{\nu_2}}\mathfrak c(t,m,n)e^{{\rm i}(\langle m,\omega\rangle x+\langle n,\omega^\prime\rangle y)};\label{sou}\\
\mathfrak u_0(t,x,y)&=\sum_{(m,n)\in\mathbb Z^{\nu_1}\times\mathbb Z^{\nu_2}}e^{-{\rm i}\big(\langle m,\omega\rangle^2+\langle n,\omega^\prime\rangle^2\big)t}c(m,n)e^{{\rm i}(\langle m,\omega\rangle x+\langle n,\omega^\prime\rangle y)}.\label{lis}
\end{align}

We claim that $\mathfrak u$ is the unique spatially quasi-periodic solution to the Cauchy problem \eqref{NLS2}-\eqref{ID2}. The proof method for the uniqueness problem is similar to the proof of Cauchy sequence in \autoref{cau}. So we omit it here and pay attention to the asymptotic dynamical behaviour within the obtained time scale.

In fact, if $0<t\sim |\varepsilon|^{-1+\eta}$, where $0<\eta\ll1$, we have
\begin{align*}
&\|(\mathfrak u-\mathfrak u_0)(t,x,y)\|^2_{\mathcal H^{(\rho_1,\rho_2)}(\mathbb R^2)}\\
=~&\|
e^{\rho_1|m|+\rho_2|n|}
(\mathfrak c-c_0)(t,m,n)
\|^2_{\ell^2_{(m,n)}(\mathbb Z^{\nu_1}\times\mathbb Z^{\nu_2})}\\
\leq~&|\varepsilon|^2\sum_{(m,n)\in\mathbb Z^{\nu_1}\times\mathbb Z^{\nu_2}}e^{2\rho_1|m|+2\rho_2|n|}\left|\int_0^t\sum_{\substack{(m,n)=\sum_{j=1}^3(-1)^{j-1}(m_j,n_j)\\(m_j,n_j)\in\mathbb Z^{\nu_1}\times\mathbb Z^{\nu_2}}}\prod_{j=1}^3\mathfrak c(s,m_j,n_j){\rm d}s\right|^2\\
\leq~&\mathcal A^6(|\varepsilon|t)^2\sum_{(m,n)\in\mathbb Z^{\nu_1}\times\mathbb Z^{\nu_2}}e^{2\rho_1|m|+2\rho_2|n|}\\
&\left|\sum_{\substack{(m,n)=\sum_{j=1}^3(-1)^{j-1}(m_j,n_j)\\(m_j,n_j)\in\mathbb Z^{\nu_1}\times\mathbb Z^{\nu_2}\\j=1,2,3}}\prod_{j=1}^3e^{-\left(\frac{\kappa_1}{2}|m_j|+\frac{\kappa_2}{2}|n_j|
\right)}\right|^2\\
\leq~&\mathcal A^6(|\varepsilon|t)^2|
\sum_{(m,n)\in\mathbb Z^{\nu_1}\times\mathbb Z^{\nu_2}}e^{-\left(\left(\frac{\kappa_1}{2}-2\rho_1\right)|m|
+\left(\frac{\kappa_2}{2}-2\rho_2\right)|n|\right)}\\
&\left|\sum_{\substack{(m_j,n_j)\in\mathbb Z^{\nu_1}\times\mathbb Z^{\nu_2}\\j=1,2,3}}\prod_{j=1}^{3}e^{-\left(\frac{\kappa_1}{4}|m_j|+\frac{\kappa_2}{4}|n_j|\right)}
\right|^2\\
\lesssim~&_{\mathcal A,\kappa_1,\kappa_2,\rho_1,\rho_2,\nu_1,\nu_2}|\varepsilon|^{2\eta}\rightarrow0,
\end{align*}
as $\varepsilon\rightarrow0$, provided that
\begin{align*}
0<\frac{\kappa_j}{2}-2\rho_j\leq 1, \quad j=1,2.
\end{align*}

This completes the proof of \autoref{thm}.


\bibliography{34NLS}
\bibliographystyle{amsalpha}

\end{document}